\documentclass[12pt]{article}
\usepackage{amsmath}
\usepackage{amssymb,amscd}
\usepackage{bm}
\usepackage{wrapfig}

\usepackage{amsbsy} 
\usepackage{hyperref} 

\usepackage{color}
\usepackage{bm}
\usepackage[all]{xy}

\usepackage{theorem}

\newtheorem{theorem}{Theorem}[section]
 \newtheorem{proposition}[theorem]{Proposition}
 
 \newtheorem{lemma}[theorem]{Lemma}

{\theorembodyfont{\normalfont}
 \newtheorem{definition}[theorem]{Definition}
 \newtheorem{example}{Example}[section]
 \newtheorem{remark}{Remark}[section]
 }

\newenvironment{proof}[1][Proof]{\begin{trivlist}
\item[\hskip \labelsep {\bfseries #1}]}{\end{trivlist}}
\newcommand{\qed}{\nobreak \ifvmode \relax \else
      \ifdim\lastskip<1.5em \hskip-\lastskip
      \hskip1.5em plus0em minus0.5em \fi \nobreak
      \vrule height0.75em width0.5em depth0.25em\fi}


\makeatletter
\makeatother





\setlength{\oddsidemargin}{0pt}
\setlength{\evensidemargin}{0pt}
\setlength{\marginparwidth}{0pt}
\setlength{\marginparsep}{10pt}
\setlength{\topmargin}{0pt}
\setlength{\headheight}{12pt}
\setlength{\headsep}{0pt}
\setlength{\footskip}{42pt}
\setlength{\textheight}{625pt}
\setlength{\textwidth}{475pt}
\setlength{\columnsep}{10pt}
\setlength{\columnseprule}{0pt}
\newlength{\minitwocolumn}
\setlength{\minitwocolumn}{0.5\textwidth}
\addtolength{\minitwocolumn}{-0.5\columnsep}

\baselineskip28pt



\newcommand{\beq}{\begin{equation}}
\newcommand{\eeq}{\end{equation}}
\newcommand{\bea}{\begin{eqnarray*}}
\newcommand{\eea}{\end{eqnarray*}}
\newcommand{\beqa}{\begin{eqnarray}}
\newcommand{\eeqa}{\end{eqnarray}}

\newcommand{\bX}{\boldsymbol{X}}


\def\bw{{\bm{W}}}
\def\bW{{\bm{W}}}
\def\by{{\bm{Y}}}
\def\bz{{\bm{Z}}}

\def\bvartheta{{\bm{\vartheta}}}

\def\bR{{\mathbb{R}}}

\def\brT{{\mathbb{T}}}

\newcommand{\calD}{{\mathcal D}}

\newcommand{\calF}{{\mathcal F}}

\newcommand{\calL}{{\mathcal L}}
\newcommand{\calM}{{\mathcal M}}

\newcommand{\calO}{{\mathcal O}}

\newcommand{\calR}{{\mathcal R}}

\newcommand{\Map}{{\rm Map}}
\newcommand{\ev}{{\rm ev}}

\newcommand{\sbv}[2]{{\{{{#1},{#2}}\}}}
\newcommand{\courant}[2]{{[{{#1},{#2}}]_D}}

\newcommand{\courantr}[2]{{[{{#1},{#2}}]_D^{\pi}}}

\newcommand{\rhoe}{\rho_E}
\newcommand{\rhoa}{{\rho_A}}

\newcommand{\rhott}{\rho_{T \oplus T^*}}

\newcommand{\bracket}[2]{\langle #1,\,#2\rangle}
\newcommand{\bracketm}[2]{{\langle #1,\,#2\rangle_{-}}}

\newcommand{\inner}[2]{{({{#1},{#2}})}}

\newcommand\Hom{\mathop{\mathrm{Hom}}\nolimits}





\newcommand{\rd}{\mathrm{d}}
\def\sd{{\bm{\mathrm{d}}}}
\def\qd{\mathrm{D}}
\def\sdd{{\bm{\mathrm{D}}}}
\def\bbd{{\bm{\mathrm{d}}}}

\newcommand{\momega}{{\omega}}
\newcommand{\gomega}{{\omega}_{grad}}
\newcommand{\ggomega}{{\omega_{AKSZ}}}
\newcommand{\gS}{{S_{AKSZ}}}

\newcommand{\baS}{{{}^A S}}
\newcommand{\rdd}{{\mathrm{D}}}

\newcommand{\rdv}{{\mathrm{D}_V}}
\newcommand{\rdvn}{{\mathrm{D}_V^{\nabla}}}

\newcommand{\spans}[1]{\langle #1 \rangle}

\begin{document}


\baselineskip 0.7cm

\begin{titlepage}
\begin{flushright}
\end{flushright}

\vskip 1.35cm
\begin{center}
{\Large \bf
Hamilton Lie algebroids over Dirac structures
and sigma models}
\vskip 1.2cm
Noriaki Ikeda
\footnote{E-mail:\
nikeda@se.ritsumei.ac.jp
}
\vskip 0.4cm

{\it
Department of Mathematical Sciences,
Ritsumeikan University \\
Kusatsu, Shiga 525-8577, Japan \\
}
\vskip 0.4cm

\today

\vskip 1.5cm

\begin{abstract}
We propose a Hamiltonian Lie algebroid and a momentum section over a Dirac structure as a generalization of a Hamiltonian Lie algebroid over a pre-symplectic manifold and one over a Poisson manifold.
A Hamiltonian Lie algebroid and a momentum section generalize a Hamiltonian 
$G$-space and a momentum map over a symplectic manifold.
We prove some properties of this new Hamiltonian Lie algebroid and construct a mechanics based on this structure as an application. These new mechanics are called the gauged Poisson sigma model and the gauged Dirac sigma model.
\end{abstract}

\end{center}
\end{titlepage}



\setcounter{page}{2}


\rm

\section{Introduction}
\noindent
A momentum map is a fundamental object in symplectic geometry, inspired by symmetries and conserved quantities in analytical mechanics.
A momentum map is defined on a symplectic manifold with a Lie group action.
Moreover, we assume the existence of a so-called Hamiltonian function, which generates the corresponding action. Such a space is called a Hamiltonian $G$-space.

Analyses of physical models suggest that Lie group actions in momentum maps should be generalized to Lie groupoid actions in order to capture symmetries and conserved quantities in physical theories \cite{Cattaneo:2000iw, Alekseev:2004np, Blohmann:2010jd}.
A 'groupoid' generalization of a Lie algebra is a Lie algebroid.

A Lie algebroid is an infinitesimal object of a Lie groupoid, analogous to the way a Lie algebra is an infinitesimal object of a Lie group.

Recently, Blohmann and Weinstein \cite{Blohmann:2018} proposed a generalization of a momentum map and a Hamiltonian $G$-space over a pre-symplectic manifold with a Lie algebra (Lie group) action to a Lie algebroid (Lie groupoid) setting. Their work was inspired by the analysis of the Hamiltonian formalism of general relativity \cite{Blohmann:2010jd}. Mathematically, the idea of generalizing a momentum map is natural in the following sense.
A momentum map is a map $\mu: M \rightarrow \mathfrak{g}^*$, where $\mathfrak{g}^*$ is the dual of a Lie algebra. It is regarded as a section of a trivial vector bundle $M \times \mathfrak{g}^*$.
It is natural to generalize the trivial bundle to the dual of a general vector bundle $A^*$ and consider a corresponding generalization of momentum maps.

The generalization of the momentum map for a Lie algebroid is called a momentum section, and a Hamiltonian $G$-space is generalized to a Hamiltonian Lie algebroid \cite{Blohmann:2018}.
Moreover, the concept of a momentum section has been further extended to a momentum section on a Courant algebroid \cite{Ikeda:2021fjk} and to momentum sections over a pre-multisymplectic manifold \cite{Hirota:2021isx}.

A momentum section and a Hamiltonian Lie algebroid over a Poisson manifold were proposed by Blohmann, Ronchi, and Weinstein in \cite{Blohmann:2023}.

In this paper, we present a unified formulation of Hamiltonian Lie algebroids over pre-symplectic manifolds and Poisson manifolds.
Both pre-symplectic structures and Poisson structures are described as 
Dirac structures \cite{Courant}, which are subbundles of Courant algebroids $TM \oplus T^*M$ \cite{LWX}.
Relations between momentum maps on symplectic manifolds and Dirac structures have been studied in
\cite{BursztynCrainic1, BursztynCrainic2, BursztynIglesiasPonteSevera, Balibanu-Mayrand}.
We define and analyze a Hamiltonian Lie algebroid and a momentum section on a Dirac structure, and we prove that Hamiltonian Lie algebroids over pre-symplectic and Poisson structures are special cases of this framework.

Some physically relevant models, such as constrained Hamiltonian mechanics and sigma models, exhibit this structure \cite{Ikeda:2019pef}.
In Section \ref{sec:physmod}, we construct physical models incorporating the structures of Hamiltonian Lie algebroids over Poisson manifolds and Dirac structures, respectively. These models are topological sigma models that generalize the Poisson sigma model and the Dirac sigma model.
We refer to them as the gauged Poisson sigma model (GPSM) and the gauged Dirac sigma model, which can be understood as Poisson sigma models and Dirac sigma models equipped with Lie algebroid gauge symmetries.
Our construction is based on the Q-manifold and the AKSZ formalism \cite{Alexandrov:1995kv}. See also \cite{Cattaneo:2001ys, Roytenberg:2006qz, Ikeda:2012pv}.

For momentum maps with Lie group actions, the BFV and AKSZ formulations have been studied in \cite{Bonechi:2012kh}, and the gauging of the Poisson sigma model has been analyzed in \cite{Zucchini:2008cg}.
Our realization of the BV action differs from previous approaches, as we construct a Q-manifold formulation of the Hamiltonian Lie algebroid and the momentum section.

This paper is organized as follows.
In Section 2, we introduce Lie algebroids and related concepts, and we establish the notation used throughout the paper.
In Section 3, we define a Hamiltonian Lie algebroid and a momentum section over a Dirac structure and discuss some of their properties.
We prove that Hamiltonian Lie algebroids over pre-symplectic and Poisson manifolds are special cases of this framework.
In Section 4, we present some examples.
In Section 5, we construct sigma models that incorporate Hamiltonian Lie algebroids over Poisson manifolds and Dirac structures.
In Section 6, we provide a discussion and outlook.
Finally, in the Appendix, we summarize some useful formulas.

\section{Preliminary}
\noindent
In this section, we summarize some definitions and previous results, including Lie algebroids and related notions, momentum sections, and Hamiltonian Lie algebroids over pre-symplectic and Poisson manifolds.

\subsection{Lie algebroids}

\begin{definition}
Let $A$ be a vector bundle over a smooth manifold $M$.
A Lie algebroid $(A, [-,-], \rhoa)$ is a vector bundle $A$ with
a bundle map $\rhoa: A \rightarrow TM$ called the anchor map, 
and a Lie bracket
$[-,-]: \Gamma(A) \times \Gamma(A) \rightarrow \Gamma(A)$
satisfying the Leibniz rule,
\begin{eqnarray}
[e_1, fe_2] &=& f [e_1, e_2] + \rhoa(e_1) f \cdot e_2,
\end{eqnarray}
where $e_1, e_2 \in \Gamma(A)$ and $f \in C^{\infty}(M)$.
\end{definition}
A Lie algebroid generalizes both a Lie algebra and the space of vector fields on a smooth manifold.
\begin{example}[Lie algebras]
Let a manifold $M$ be one point $M = \{pt \}$. 
Then a Lie algebroid is a Lie algebra $\mathfrak{g}$.
\end{example}
\begin{example}[Tangent Lie algebroids]\label{tangentLA}
If a vector bundle $A$ is a tangent bundle $TM$ and $\rhoa = \mathrm{id}$, 
then a bracket $[-,-]$ is a normal Lie bracket 
on the space of vector fields $\mathfrak{X}(M)$
and $(TM, [-,-], \mathrm{id})$ is a Lie algebroid.
It is called a \textit{tangent Lie algebroid}.
\end{example}

\begin{example}[Action Lie algebroids]\label{actionLA}
Assume a smooth action of a Lie group $G$ to a smooth manifold $M$, 
{$M \times G \rightarrow M$.}
The differential map of this action induces an infinitesimal action of the Lie algebra $\mathfrak{g}$ of $G$ on $M$.
Since $\mathfrak{g}$ acts as a differential operator on $M$,
the differential map
is a bundle map $\rho: M \times \mathfrak{g} \rightarrow TM$.
Consistency of a Lie bracket requires that $\rho$ is 
a Lie algebra morphism such that
\begin{eqnarray}
~[\rho(e_1), \rho(e_2)] &=& \rho([e_1, e_2]),
\label{almostLA}
\end{eqnarray}
where the bracket on the left-hand side 
is the standard Lie bracket of vector fields.  
These data define a Lie algebroid $(A= M \times \mathfrak{g}, [-,-], \rho)$.
known as an \textit{action Lie algebroid}.
\end{example}

\begin{example}[Poisson Lie algebroids]\label{Poisson}
%
A bivector field $\pi \in \Gamma(\wedge^2 TM)$ is called a Poisson bivector field if $[\pi, \pi]_S =0$, where $[-,-]_S$ denotes the Schouten bracket on the space of multivector fields, $\Gamma(\wedge^{\bullet} TM)$.
A smooth manifold $M$ equipped with a Poisson bivector field $\pi$ is 
called a Poisson manifold $(M, \pi)$.

Let $(M, \pi)$ be a Poisson manifold. Then, a Lie algebroid structure is induced on $T^*M$.
The map $\pi^{\sharp}$ is defined by
$\pi^{\sharp}: T^*M \rightarrow TM$ by $\bracket{\pi^{\sharp}(\alpha)}{\beta}
= \pi(\alpha, \beta)$ for all $\beta \in \Omega^1(M)$.
The anchor map is given by $\rho= - \pi^{\sharp}$,
and a Lie bracket on $\Omega^1(M)$ is defined by the Koszul bracket:
\begin{eqnarray}
[\alpha, \beta]_{\pi} = \calL_{\pi^{\sharp} (\alpha)}\beta - \calL_{\pi^{\sharp} (\beta)} \alpha - \rd(\pi(\alpha, \beta)),
\label{Koszulbracket}
\end{eqnarray}
where $\alpha, \beta \in \Omega^1(M)$.
Thus, $(T^*M, [-, -]_{\pi}, -\pi^{\sharp})$ is a Lie algebroid.
\end{example}

\begin{example}[Lie algebroids induced from twisted Poisson structures]\label{tPoisson}
A bivector field $\pi \in \Gamma(\wedge^2 TM)$ and 
a closed $3$-form $H \in \Omega^3(M)$ define a twisted Poisson manifold
$(M, \pi, H)$ if they satisfy
\begin{eqnarray}
&& \frac{1}{2}[\pi, \pi]_S 
= \bracket{\otimes^{3} \pi}{H},
\label{tPoisson1}
\\
&& \rd H =0.
\end{eqnarray}
Here, the term $\bracket{\otimes^{3} \pi}{H}$ is given by 
\begin{eqnarray}
\bracket{\otimes^{3} \pi}{H}(\alpha_1, \alpha_2, \alpha_3)
:= H(\pi^{\sharp} (\alpha_1), \pi^{\sharp} (\alpha_2), \pi^{\sharp} (\alpha_3)),
\end{eqnarray}
for $\alpha_i \in \Omega^1(M)$ \cite{Klimcik:2001vg, Severa:2001qm}.

As in the Poisson case, we define the bundle map, 
$\rho= -\pi^{\sharp}: T^*M \rightarrow TM$,
and introduce a Lie bracket deformed by $H$:
\begin{eqnarray}
[\alpha, \beta]_{\pi,H} = \calL_{\pi^{\sharp} (\alpha)}\beta - \calL_{\pi^{\sharp} (\beta)} \alpha - \rd(\pi(\alpha, \beta))
+ \iota_{\pi^{\sharp}(\alpha)} \iota_{\pi^{\sharp}(\beta)} H,
\end{eqnarray}
for $\alpha, \beta \in \Omega^1(M)$.
Then, $(T^*M, [-, -]_{\pi, H}, -\pi^{\sharp})$ defines a Lie algebroid.
\end{example}
For a comprehensive review of Lie algebroids and their properties, see, for example, \cite{Mackenzie}.

For a Lie algebroid $A$, sections of the exterior algebra of $A^*$ are called \textit{$A$-differential forms}.
A differential ${}^A \rd: \Gamma(\wedge^m A^*)
\rightarrow \Gamma(\wedge^{m+1} A^*)$ on the spaces of $A$-differential forms, $\Gamma(\wedge^{\bullet} A^*)$,
called the \textit{Lie algebroid differential}, 
or the \textit{$A$-differential}, is defined as follows. 
\begin{definition}
The Lie algebroid differential ${}^A \rd: \Gamma(\wedge^m A^*)
\rightarrow \Gamma(\wedge^{m+1} A^*)$ is given by
\begin{eqnarray}
{}^A \rd \eta(e_1, \ldots, e_{m+1}) 
&=& \sum_{i=1}^{m+1} (-1)^{i-1} \rho(e_i) \eta(e_1, \ldots, 
\check{e_i}, \ldots, e_{m+1})
\nonumber \\ && 
+ \sum_{1 \leq i < j \leq m+1} (-1)^{i+j} \eta([e_i, e_j], e_1, \ldots, \check{e_i}, \ldots, \check{e_j}, \ldots, e_{m+1}),
\label{LAdifferential}
\end{eqnarray}
where $\eta \in \Gamma(\wedge^m A^*)$ and $e_i \in \Gamma(A)$.
\end{definition}
The $A$-differential satisfies $({}^A \rd)^2=0$.
It generalizes both the de Rham differential on $T^*M$ and the Chevalley-Eilenberg differential on a Lie algebra.

\if0
The Lie algebroid differential is extended to 
the space of differential forms taking a value in $\wedge^m E^*$,
$\Omega^k(M, \wedge^m E^*)$
It is also denoted by ${}^E \rd$.
\fi

\subsection{Connections on Lie algebroids}\label{connectionLA}
We begin by introducing a connection on a vector bundle $E$.
A connection is an $\bR$-linear map,
$\nabla:\Gamma(E)\rightarrow \Gamma(E \otimes T^*M)$,
satisfying the Leibniz rule,
\begin{eqnarray}
\nabla (f s) = f \nabla s + (\rd f) \otimes s,
\end{eqnarray}
for all $s \in \Gamma(E)$ and $f \in C^{\infty}(M)$.
The dual connection on $E^*$ is defined by the equation,
\begin{eqnarray}
\rd \inner{\mu}{s} = \inner{\nabla \mu}{s} + \inner{\mu}{\nabla s},
\end{eqnarray}
for all sections $\mu \in \Gamma(E^*)$ and $s \in \Gamma(E)$,
where $\inner{-}{-}$ denotes the pairing between $E$ and $E^*$.
For simplicity, we use the same notation $\nabla$ for the dual connection.

On a Lie algebroid, we define another derivation called an $A$-connection.
Let $A$ be a Lie algebroid over a smooth manifold $M$, and let $E$ be a vector bundle over the same base manifold $M$.
An \textit{$A$-connection} on $E$ 
with respect to the Lie algebroid $A$ is a $\bR$-linear map,
${}^A \nabla: \Gamma(E) \rightarrow \Gamma(E \otimes A^*)$,
satisfying 
\begin{eqnarray}
{}^A \nabla_e (f s) = f {}^A \nabla_e s + (\rho(e) f) s,
\end{eqnarray}
for $e \in \Gamma(A)$, $s \in \Gamma(E)$ and $f \in C^{\infty}(M)$.
%
The ordinary connection on $TM$ is a special case of an 
$A$-connection for $A=TM$, the ordinary connection 
$\nabla$ corresponds to the $A$-connection ${}^{TM} \nabla$.

\if0
The \textit{standard $E$-connection} on $E'=E$,
${}^E \nabla: \Gamma(E) \rightarrow \Gamma(E^* \otimes E)$ is defined by
\begin{eqnarray}
&& {}^E \nabla_{e} e^{\prime} := \nabla_{\rho(e)} {e^{\prime}},
\label{stEconnection}
\end{eqnarray}
for $e, e^{\prime} \in \Gamma(E)$.
A general $E$ connection ${}^E \nabla$ on $E$ is given by 
\begin{eqnarray}
&& {}^E \nabla_{e} e^{\prime} = \nabla_{\rho(e)} {e^{\prime}} 
- \chi(e, e^{\prime}),
\end{eqnarray}
with some tensor $\chi \in \Gamma(E \otimes E^* \otimes E^*)$.
\fi

If a standard vector bundle connection $\nabla$ on $A$ is given,
then, an $A$-connection for the tangent bundle $E=TM$,
${}^A \nabla: \Gamma(TM) \rightarrow \Gamma(TM \otimes A^*)$,
called the \textit{basic $A$-connection}
is defined by
\begin{eqnarray}
{}^A \nabla_{e} v &:=& \calL_{\rho(e)} v + \rho(\nabla_v e)
= [\rho(e), v] + \rho(\nabla_v e),
\label{stEconnection1}
\end{eqnarray}
where 
$e \in \Gamma(A)$ and $v \in \mathfrak{X}(M)$.
For a $1$-form $\alpha \in \Omega^1(M)$, the \textit{basic $A$-connection} 
is given by
\begin{eqnarray}
{}^A \nabla_{e} \alpha &:=& \calL_{\rho(e)} \alpha 
+ \bracket{\rho(\nabla e)}{\alpha}.
\label{Econoneform}
\end{eqnarray}
For a general discussion on connections on Lie algebroids, see
\cite{AbadCrainic, CrainicFernandes, DufourZung}.

Given a connection $\nabla$, the covariant derivative extends naturally  
to the derivation on the space of differential forms taking a value in
$\wedge^m A^*$,
$\Omega^k(M, \wedge^m A^*)$, which is called 
the \textit{exterior covariant derivative}.

Similarly, an $A$-connection ${}^A \nabla$ extends to a derivation
satisfying the Leibniz rule on the space 
$\Gamma(\wedge^m A^* \otimes \wedge^k E)$.
This extension is called \textit{the $A$-exterior covariant derivative}
${}^A \nabla: \Gamma(\wedge^m A^* \otimes \wedge^k E)
\rightarrow \Gamma(\wedge^{m+1} A^* \otimes \wedge^k E)$
and is denoted by the same notation $\nabla$ and ${}^A\nabla$.
In this paper, we consider the $A$-exterior covariant derivatives 
on $E = T^*M$ and $E = TM$.
For $E = T^*M$, the concrete definition is as follows.
\begin{definition}
For $\Omega^k(M, \wedge^m A) = \Gamma(\wedge^m A^* \otimes \wedge^k T^*M)$,
the \textit{$A$-exterior covariant derivative}
${}^A \nabla: \Omega^k(M, \wedge^m A^*) \rightarrow \Omega^k(M, \wedge^{m+1} A^*)$ is defined by
\begin{eqnarray}
({}^A \nabla \alpha)(e_1, \ldots, e_{m+1})
&:=& \sum_{i=1}^{m+1} (-1)^{i-1} 
{}^A \nabla_{e_i}
(\alpha(e_1, \ldots, \check{e_i}, \ldots, e_{m+1}))
\nonumber \\ && 
+ \sum_{1 \leq i < j \leq m+1} (-1)^{i+j} \alpha([e_i, e_j], e_1, \ldots, \check{e_i}, \ldots, \check{e_j}, \ldots, e_{m+1}),
\label{Ediffconnection}
\end{eqnarray}
for $\alpha \in \Omega^k(M, \wedge^m A^*)$ and $e_i \in \Gamma(A)$.
\end{definition}
For $E=TM$, the $A$-exterior covariant derivative is given by
taking the pairing,
\begin{eqnarray}
{}^A \rd \bracket{\phi}{\alpha_1, \ldots, \alpha_k} = 
\bracket{{}^A \nabla \phi}{\alpha_1, \ldots, \alpha_k}
+ \sum_{i=1}^k (-1)^{i-1} \bracket{\phi}{\alpha_1, \ldots, {}^A \nabla \alpha_i, \ldots, \alpha_k},
\end{eqnarray}
for $\phi \in \mathfrak{X}^k(M, \wedge^m A^*)$
and $\alpha_i \in \Omega^1(M)$.

For two connections, $\nabla$ and ${}^A \nabla$, 
we introduce tensors:
the \textit{curvature}, $R \in \Omega^2(M, A \otimes A^*)$, 
the \textit{$A$-torsion}, $T \in \Gamma(A \otimes \wedge^2 A^*)$, 
and the \textit{basic curvature} \cite{Blaom}.
$\baS \in \Omega^1(M, \wedge^2 A^* \otimes A)$.
These are defined as follows:
\beqa
R(e, e^{\prime}) &:=& [\nabla_e, \nabla_{e^{\prime}}] - \nabla_{[e, e^{\prime}]}, 
\label{curv}
\\
T(e, e^{\prime}) &:=& 
\nabla_{\rho(e)} e^{\prime} - \nabla_{\rho(e^{\prime})} e - [e, e^{\prime}],
\label{Etorsion}
\\
\baS(e, e^{\prime}) &:=& \calL_e (\nabla e^{\prime}) 
- \calL_{e^{\prime}} (\nabla e) 
- \nabla_{\rho(\nabla e)} e^{\prime} + \nabla_{\rho(\nabla e^{\prime})} e
\nonumber \\ &&
- \nabla[e, e^{\prime}] = (\nabla T + 2 \mathrm{Alt} \, \iota_\rho R)(e, e^{\prime}),
\label{bcurv}
\eeqa
for $e, e^{\prime} \in \Gamma(A)$.

\subsection{Momentum sections and Hamiltonian Lie algebroids}
In this section, we explain momentum sections and Hamiltonian Lie algebroids \cite{Blohmann:2018, Blohmann:2023}, which generalize the concept of a momentum map on a symplectic manifold. For previous or related works and physical applications, see also \cite{Kotov:2016lpx, Ikeda:2019pef, Ikeda:2021fjk}.

A closed $2$-form $\omega \in \Omega^2(M)$ on a manifold $M$ is called a pre-symplectic form. A pair $(M, \omega)$, where $M$ is a manifold and $\omega$ is a pre-symplectic form, is called a pre-symplectic manifold. If $\omega$ is nondegenerate, then $(M, \omega)$ is a symplectic manifold.

\if0
Notation ${\iota_{\rho}^k \omega} \in \Omega^{n-k}(M, \wedge^k A^*)$ is 
defined by
\begin{eqnarray}
\iota_{\rho}^{k} \momega(v_{k+1}, \ldots, v_{n+1}) (e_1, \ldots, e_k)
&=& 
\iota_{\rho(e_1)} \ldots \iota_{\rho(e_k)} \momega(v_{k+1}, \ldots, v_{n+1}) 
\nonumber \\
&=& \bracket{\otimes^{k}\rho}{\momega}
(e_k, \ldots, e_1, v_{k+1}, \ldots, v_{n+1})
\nonumber \\
&:=& 
\momega(\rho(e_k), \ldots, \rho(e_1), v_{k+1}, \ldots, v_{n+1}),
\end{eqnarray}
for $e_1, \ldots, e_k \in \Gamma(A)$ and 
$v_{k+1}, \ldots, v_{n+1} \in \mathfrak{X}(M)$.
\fi

A momentum section and a Hamiltonian Lie algebroid over a pre-symplectic manifold is defined as follows.
\begin{definition}\label{momsymp}[Hamiltonian Lie algebroids over pre-symplectic manifolds]
Let $(M, \omega)$ be a pre-symplectic manifold, and 
take let $(A, [-,-], \rhoa)$ be a Lie algebroid over $M$.
%

\noindent
(S1) The Lie algebroid $A$ is called 
\textit{pre-symplectically anchored} if $\omega$ satisfies
\begin{eqnarray}
&& 
{}^A \nabla \omega =0.
\label{HH1}
\end{eqnarray}
(S2) A section $\mu \in \Gamma(A^*)$ is a \textit{$\nabla$-momentum section} if it satisfies
\begin{eqnarray}
&& 
(\nabla \mu)(e) = - \iota_{\rhoa(e)} \omega,
\label{HH2}
\end{eqnarray}
for all $e \in \Gamma(A)$.

\noindent
(S3) The section $\mu$ is \textit{bracket-compatible} if it satisfies
\begin{eqnarray}
&& 
({}^A \rd \mu)(e_1, e_2) = \omega(\rhoa(e_1), \rhoa(e_2)),
\label{HH3}
\end{eqnarray}
where all $e_1, e_2 \in \Gamma(A)$.

\if0
A Hamiltonian Lie algebroid over a pre-symplectic manifold is defined as
follows.
\fi
\if0
\begin{definition}\label{weaklyHamiltonianLA}
A Lie algebroid $A$ with a connection $\nabla$ and a section $\mu \in \Gamma(A^*)$
is called \textbf{weakly Hamiltonian} if (H1) and (H2) are satisfied.
If the condition is satisfied on a neighborhood of every point in $M$, 
it is called locally weakly Hamiltonian.
\end{definition}
\fi

A Lie algebroid $A$ over a pre-symplectic manifold equipped with a connection $\nabla$ and a section $\mu \in \Gamma(A^*)$ is called \textbf{Hamiltonian}
if it satisfies Eqs.~\eqref{HH1}, \eqref{HH2} and \eqref{HH3}.
\end{definition}
Suppose that $M$ is equipped with an action of a Lie group $G$,
and let $\omega$ be a symplectic form on $M$.
For the Lie algebra $\mathfrak{g}$ of $G$, the trivial bundle 
$A = M \times \mathfrak{g}$ has the structure of an action Lie algebroid 
in Example \ref{actionLA}.
We consider the trivial connection $\nabla=\rd$ on $M \times \mathfrak{g}$.
The section $\mu \in \Gamma(M \times \mathfrak{g}^*)$ is regarded as a map
$\mu: M \rightarrow  \mathfrak{g}^*$.
Then, three conditions of Definition \ref{momsymp} simplify 
as follows:
Equation \eqref{HH2} is
\begin{eqnarray}
(\rd \mu)(e) = - \iota_{\rhoa(e)} \omega.
\label{HMM2}
\end{eqnarray}
Equation \eqref{HH1} is
${}^A \nabla_e \omega = \calL_{\rhoa(e)} \omega = 0$, which follows from the definition of the $A$-connection and Eq.~\eqref{HMM2}.
Under Equations
\eqref{HH2} and \eqref{HH1}, this condition is equivalent to 
$\mu([e_1, e_2]) = \rhoa(e_1) \mu(e_2)$, which implies that 
$\mu$ is inifintesimally equivariant.
\if0
Three equations \eqref{HH2}--\eqref{HH3} are equivalent to
\begin{eqnarray}
&& 
\calL_{\rhoa(e)} \omega =0,
\label{HH11}
\\
&& 
(\rd \mu)(e) = - \iota_{\rhoa(e)} \omega,
\label{HH12}
\\
&& 
\mu([e_1, e_2]) = \rhoa(e_1) \mu(e_2),
\label{HH13}
\end{eqnarray}
\fi
Thus, these three conditions ensure that
$\mu$ defines a momentum map on the symplectic manifold $M$.

A Hamiltonian Lie algebroid over a Poisson manifold is defined as follows \cite{Blohmann:2023}.
\begin{definition}\label{momPois}[Hamiltonian Lie algebroids over Poisson manifolds]
Let $(M, \pi)$ be a Poisson manifold with a Poisson bivector field $\pi$,
and let $(A, [-,-], \rhoa)$ be a Lie algebroid over $M$.
Similar to the pre-symplectic case, we define the following three conditions:

\noindent
(P1) The Lie algebroid $A$ is called \textit{Poisson anchored} 
if $\pi$ satisfies
\begin{eqnarray}
{}^A \nabla \pi &=& 0.
\label{PMS1}
\end{eqnarray}
(P2) A section $\mu \in \Gamma(A^*)$ is a \textit{$\nabla$-momentum section} if it satisfies
\begin{eqnarray}
\rhoa(e) &=& - \iota_{\nabla \mu(e)} \pi,
\label{PMS2}
\end{eqnarray}
for all $e \in \Gamma(A)$.

\noindent
(P3) The section $\mu$ is called \textit{bracket-compatible} if it satisfies
\begin{eqnarray}
({}^A \rd \mu) (e_1, e_2) &=& - \pi((\nabla \mu)(e_1), (\nabla \mu)(e_2)),
\label{PMS3}
\end{eqnarray}
for all $e_1, e_2 \in \Gamma(A)$.

A Lie algebroid $A$ over a Poisson manifold, equipped with a connection $\nabla$ and a section $\mu \in \Gamma(A^*)$, is called \textbf{Hamiltonian}
if it satisfies Eqs.~\eqref{PMS1}, \eqref{PMS2} and \eqref{PMS3}.
\end{definition}

If the Poisson structure $\pi$ is nondegenerate, then the corresponding symplectic form is given by $\omega = \pi^{-1}$.
In this case, a Hamiltonian Lie algebroid over a Poisson manifold 
is a Hamiltonian Lie algebroid over a symplectic manifold.

\section{Hamiltonian Lie algebroids over Dirac structures
}\label{sec:Dirac}
In this section, we define a momentum section and a Hamiltonian Lie algebroid over a Dirac structure.
\subsection{Courant algebroids and Dirac structures}
In this subsection, we review Courant algebroids and Dirac structures.

\begin{definition}\label{courantdefinition}
A Courant algebroid is a vector bundle $E$ over $M$,
equipped with a nondegenerate symmetric bilinear form
$\bracket{-}{-}$, a bilinear operation $\courant{-}{-}$ on $\Gamma(E)$,
and a bundle map called an anchor map,
$\rhoe: E \longrightarrow TM$, satisfying the following properties:
%
\begin{eqnarray}
&& 1, \quad \courant{e_1}{\courant{e_2}{e_3}} = \courant{\courant{e_1}{e_2}}{e_3} + \courant{e_2}{\courant{e_1}{e_3}}, 
  \label{courantdef1}
\\
&& 2, \quad \rhoe(\courant{e_1}{e_2}) = [\rhoe(e_1), \rhoe(e_2)], 
  \label{courantdef2}
\\
&& 3, \quad \courant{e_1}{f e_2} = f \courant{e_1}{e_2}
+ (\rhoe(e_1)f)e_2, 
  \label{courantdef3}
 \\
&& 4, \quad \courant{e}{e} = \frac{1}{2} {\cal D} \bracket{e}{e},
  \label{courantdef4}
\\ 
&& 5, \quad \rhoe(e_1) \bracket{e_2}{e_3}
= \bracket{\courant{e_1}{e_2}}{e_3} + \bracket{e_2}{\courant{e_1}{e_3}},
  \label{courantdef5}
\end{eqnarray}
where 
$e, e_1, e_2, e_3 \in \Gamma(E)$, $f \in C^{\infty}(M)$ and 
${\cal D}$ is a map from $C^{\infty}(M)$ to $\Gamma (E)$, 
defined as 
$\bracket{{\cal D}f}{e} = \rhoe(e) f$ \cite{LWX}.
\end{definition}
The bracket $\courant{-}{-}$ is called the Dorfman bracket.
A Courant algebroid is encoded in the quadruple $(E, \bracket{-}{-}, \courant{-}{-}, \rhoe)$.

\begin{example}
\label{standardCA}
The \textsl{standard Courant algebroid} is the Courant algebroid 
with $E = TM \oplus T^*M$. 
The anchor map is the natural projection $\rhott:TM\oplus T^*M \longrightarrow TM$. 
The inner product, anchor map and Dorfman bracket
are defined as follows:
\begin{align}
	\bracket{X + \alpha}{Y + \beta} &= \iota_X \beta + \iota_Y \alpha, \notag \\
\rhott(X+\alpha) &= X, \notag \\
\courant{X + \alpha}{Y + \beta} &= [X, Y] + \calL_X \beta - \iota_Y \rd \alpha,
\notag
\end{align}
for sections $X + \alpha, Y + \beta \in \Gamma(TM \oplus T^*M)$, 
where $X,Y$ are vector fields and $\alpha,\beta$ are $1$-forms. 
\end{example}
\begin{example}\label{standardCAH}
The \textsl{standard Courant algebroid with H-flux} is the Courant 
algebroid which contains the same inner product $\bracket{-}{-}$ and
anchor map $\rho:TM  \oplus T^*M \longrightarrow TM$ as 
the standard Courant algebroid. 
The Dorfman bracket is deformed by a closed $3$-form $H$ 
to the bracket defined by
\begin{eqnarray}
[{X + \alpha},{Y + \beta}]_H = [X, Y] + \calL_X \beta - \iota_Y \rd \alpha
+ \iota_X \iota_Y H. 
\label{standardDorfmanbracketH}
\end{eqnarray} 
\end{example}
\if0
\begin{example}\label{contraCA}
Let $(M, \pi)$ be a Poisson manifold with a Poisson structure
$\pi \in \Gamma (\wedge^2 TM)$ and
$\calR \in \Gamma(\wedge^3 TM)$ be a $3$-vector field, 
which is closed with respect to 
the Poisson bivector field, $\rd_{\pi} \calR = [\pi, \calR]_S=0$.

A \textsl{Poisson Courant algebroid}, or a \textsl{contravariant Courant algebroid}
is a vector bundle 
$E = TM \oplus T^*M$ over the Poisson manifold $M$, 
which incorporates three operations of a Courant algebroid.
The inner product $\langle -, - \rangle$ on $TM \oplus T^*M$
is the same as in the standard Courant algebroid.
The bundle map $\rhott: TM \oplus T^*M \rightarrow TM$
is defined by $\rhott(X+\alpha) = \pi^{\sharp}(\alpha)$.
The Dorfman bracket 
is defined by
\begin{equation}
\courantr{X+\alpha}{Y+\beta} \equiv [\alpha, \beta]_{\pi} + 
\calL^{\pi}_{\alpha}Y 
-\iota_{\beta} \rd_{\pi} X - \iota_{\alpha} \iota_{\beta} \calR, \notag
\end{equation}
where
$X + \alpha, Y + \beta \in \Gamma (TM \oplus T^*M)$,
$[-, -]_{\pi}: T^*M \times T^*M \rightarrow T^*M$
is the Koszul bracket given in Example \ref{Poisson}, and
$\calL^{\pi}_{\alpha}Y = (\iota_{\alpha} \rd_{\pi} + \rd_{\pi} \iota_{\alpha}) Y$.
The data of a contravaiant Courant algebroid can then be encoded in the quadruple $(E = TM \oplus T^*M, 
\langle -, - \rangle,
\courantr{-}{-},
\rhott = 0 \oplus \pi^{\sharp}
)$. 
\end{example}
\fi

\begin{definition}
A \textsl{Dirac structure} $L$ is a 
maximally isotropic subbundle of a Courant algebroid $E$, 
whose sections are closed under the Dorfman bracket.
A Dirac structure satisfies
\begin{eqnarray}
&& \bracket{e_1}{e_2}=0, \ \ (\mbox{isotropic})
\label{Diracstr1}
\\
&& \courant{e_1}{e_2} \in \Gamma(L), \ \ (\mbox{closed})
\label{Diracstr2}
\end{eqnarray}
for all $e_1, e_2 \in \Gamma(L)$.
\end{definition}

\begin{example}[Dirac structure from pre-symplectic structure]
\label{symplecticdirac}
Let $(M, \omega)$ be a pre-symplectic manifold and
$(TM \oplus T^*M, \bracket{-}{-}, \courant{-}{-}, \rhott)$ be a 
standard Courant algebroid with $H=0$ in Example \ref{standardCA}.
Define a bundle map $\omega^{\flat}: TM \rightarrow T^*M$ 
by $\omega^{\flat}(X)(Y) = \omega(X, Y)$ for all $X, Y \in \mathfrak{X}(M)$.
Then, the subbundle 
\begin{align}
L_{\omega} &= \{ X + \omega^{\flat}(X) | X \in \mathfrak{X}(M) \},
\label{sympdirac}
\end{align}
is a Dirac structure.
Indeed, since $\omega(X, Y) = - \omega(Y, X)$,
we have $\bracket{X + \omega^{\flat}(X)}{Y + \omega^{\flat}(Y)}=0$.
Moreover,
\begin{eqnarray}
\courant{X + \omega^{\flat}(X)}{Y + \omega^{\flat}(Y)} 
&=& [X, Y] + \omega^{\flat}([X, Y]),
\end{eqnarray}
which shows that $L_{\omega}$ is involutive, confirming that it forms a Dirac structure.
\end{example}

\begin{example}[Dirac structure from Poisson manifold]
\label{Poissondirac}
Let $(M, \pi)$ be a Poisson manifold, and
let $(TM \oplus T^*M, \bracket{-}{-}, \courant{-}{-}, \rhott)$ be a 
standard Courant algebroid with $H=0$.
Consider the bundle map $\pi^{\sharp}: T^*M \rightarrow TM$,
and define the subbundle $L_{\pi} \subset TM \oplus T^*M$ by
\begin{align}
L_{\pi} &:= \{ \pi^{\sharp}(\alpha) + \alpha | \alpha \in \Omega^1(M) \}.
\label{Poisdirac}
\end{align}
Then $L_{\pi}$ is a Dirac structure.
Indeed, the inner product satisfies
$\bracket{\pi^{\sharp}(\alpha) + \alpha}{\pi^{\sharp}(\beta) + \beta}=0$
since $\pi(\alpha, \beta) = - \pi(\beta, \alpha)$.
Furthermore, a direct calculation gives
\begin{eqnarray}
\courant{\pi^{\sharp}(\alpha) + \alpha}{\pi^{\sharp}(\beta) + \beta}
&=& \pi^{\sharp}([\alpha, \beta]_{\pi}) + [\alpha, \beta]_{\pi},
\end{eqnarray}
which shows that $L_{\pi}$ is involutive.
Thus $L_{\pi}$ is a Dirac structure.
\end{example}

\subsection{Momentum sections and Hamiltonian Lie algebroids over Dirac structures}\label{sec:MSDirac}

We consider a Courant algebroid on $TM \oplus T^*M$.
Note that the Courant algebroid is not necessarily assumed to 
the standard Courant algebroid. For instance, 
we can take a Courant algebroid with a $3$-form $H$ in Example \ref{standardCAH}.

Let $(TM \oplus T^*M, \bracket{-}{-}, \courant{-}{-}, \rhott)$ 
be a Courant algebroid and let $L \subset TM \oplus T^*M$ be a Dirac structure.
Furthermore, let $(A, [-,-], \rhoa)$ be a Lie algebroid over $M$, and
let $\nabla$ be a connection on $A$.
Let $\mu$ be a section $\mu \in \Gamma(A^*)$.
Then $\rhoa + \nabla \mu$ defines a bundle map $\rhoa + \nabla \mu: A \rightarrow  TM \oplus T^*M$. Alternatively, it can be considered as a section $\rhoa + \nabla \mu 
\in \Gamma((TM \oplus T^*M) \otimes A^*)$.

We define momentum sections and Hamiltonian Lie algebroids over Dirac structures.
\begin{definition}\label{momsecdirac}
\begin{itemize}
   \item[(D1)] Let $v + \gamma \in \Gamma((TM \oplus T^*M) \otimes A^*)$,
where $v \in \Gamma(TM \otimes A^*)$ and $\gamma \in \Gamma(T^*M \otimes A^*)$.
Suppose that $(v + \gamma)(e) \in \Gamma(L)$ for every $e \in \Gamma(A)$.
The Lie algebroid $A$ is called \textit{Dirac anchored} 
if $({}^A \nabla (v + \gamma))(e_1, e_2) \in \Gamma(L)$ holds for all $e_1, e_2 \in \Gamma(A)$.
   \item[(D2)] A section $\mu \in \Gamma(A^*)$ is called a $\nabla$-\textit{momentum section} if
$(\rhoa + \nabla \mu)(e)$ is an element of the Dirac structure $L$ 
for every $e \in \Gamma(A)$,
and the map $\rhoa + \nabla \mu$ is a Lie algebroid morphism from $A$ to $L$.
   \item[(D3)] The section $\mu$ is called \textit{bracket-compatible} if it satisfies
\begin{eqnarray}
({}^A \rd \mu) (e_1, e_2) = \frac{1}{2} \bracketm{(\rhoa + \nabla \mu)(e_1)}{(\rhoa + \nabla \mu)(e_2)},
\label{D03}
\end{eqnarray}
for every $e_1, e_2 \in \Gamma(A)$.
Here, the bilinear form is given by $\bracketm{X + \alpha}{Y + \beta} = \alpha(Y) - \beta(X)$ for $X, Y \in \mathfrak{X}(M)$ and $\alpha, \beta \in \Omega^1(M)$.
\end{itemize}

A Lie algebroid $A$ with a connection $\nabla$ and a section $\mu \in \Gamma(A^*)$ is called \textbf{Hamiltonian}
if the condition (D1), (D2)  and (D3) are satisfied.
\end{definition}

\begin{remark}
In condition (D1),
$v + \gamma$ is regarded as a bundle map $v + \gamma:A \rightarrow TM \oplus T^*M$, and ${}^A \nabla (v + \gamma)$ can be regarded as a bundle map,
${}^A \nabla (v + \gamma): A \times A \rightarrow TM \oplus T^*M$.
Thus, the condition (D1) is equivalent to the following statement:

\textit{For any bundle map $v + \gamma: A \rightarrow TM \oplus T^*M$ such that $\mathrm{Im}(v + \gamma) \subset L$, it holds that $\mathrm{Im}({}^A \nabla (v + \gamma)) \subset L$.}
\end{remark}

\begin{remark}
A Lie algebroid $A$ can be regarded as a Dirac structure of $A \oplus A^*$. Thus, $\rhoa + \nabla \mu: A \rightarrow TM \oplus T^*M$ is a map between two Dirac structures. A \textit{Dirac map} between two Dirac structures is defined in 
\cite{BursztynCrainic1, BursztynCrainic2}.
\end{remark}

Definition \ref{momsecdirac} includes Hamiltonian Lie algebroids over pre-symplectic and Poisson manifolds as special cases. 
\begin{proposition}
Let $(M, \omega)$ be a pre-sympletic manifold and
let $(A, [-,-], \rhoa)$ be a Lie algebroid over $M$ with a connection $\nabla$.
Suppose that $\mu \in \Gamma(A^*)$ is a momentum section on a pre-symplectic manifold, i.e., it satisfies 
Equations \eqref{HH1}--\eqref{HH3}.
Then, 
$A$ is a Hamiltonian Lie algebroid over the Dirac structure $L_{\omega}$ with 
$H=0$ and $\mu$ is a momentum section.
\end{proposition}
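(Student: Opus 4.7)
The plan is to verify each of the conditions (D1), (D2) and (D3) of Definition \ref{momsecdirac} directly from the pre-symplectic momentum data (S1)--(S3). The governing observation is that, by Example \ref{symplecticdirac}, the Dirac structure $L_\omega$ is the graph of the bundle map $\omega^\flat : TM \to T^*M$, so each of the three conditions translates into a statement linking $\omega^\flat$ with the triple $(\rhoa, \nabla, \mu)$. The condition (D2) is almost tautological: an element $\rhoa(e)+(\nabla\mu)(e)$ belongs to $L_\omega$ precisely when $(\nabla\mu)(e)$ and $\iota_{\rhoa(e)}\omega$ agree up to the sign convention fixed in \eqref{HH2}, which is exactly (S2).

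For (D3) I will expand the skew-symmetric pairing according to its defining formula in Definition \ref{momsecdirac}:
\[
\tfrac{1}{2}\bracketm{(\rhoa+\nabla\mu)(e_1)}{(\rhoa+\nabla\mu)(e_2)}
= \tfrac{1}{2}\bigl[(\nabla\mu)(e_1)(\rhoa(e_2)) - (\nabla\mu)(e_2)(\rhoa(e_1))\bigr].
\]
Substituting (S2) and exploiting the antisymmetry of $\omega$, the two terms collapse into $\omega(\rhoa(e_1),\rhoa(e_2))$ up to an overall sign, matching the right-hand side of (S3). Hence (D3) reduces to a direct consequence of (S2) and (S3), with no further input needed.

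The nontrivial step is (D1). Given an arbitrary section $v+\gamma \in \Gamma((TM\oplus T^*M)\otimes A^*)$ with $(v+\gamma)(e)\in\Gamma(L_\omega)$ for every $e\in\Gamma(A)$ -- equivalently $\gamma(e) = \omega^\flat(v(e))$ as sections of $T^*M \otimes A^*$ -- I need to show that the same graph relation persists under ${}^A\nabla$, namely that $({}^A\nabla\gamma)(e_1,e_2) = \omega^\flat\bigl(({}^A\nabla v)(e_1,e_2)\bigr)$ for every pair $(e_1,e_2)$. The strategy is to rewrite $\gamma$ as a contraction of $\omega\in\Omega^2(M)$ with $v$ and apply the Leibniz rule for the $A$-exterior covariant derivative of Section \ref{connectionLA}, extended to the tensor products $\wedge^\bullet A^*\otimes TM$ and $\wedge^\bullet A^*\otimes T^*M$. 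This produces one term proportional to ${}^A\nabla\omega$, which is killed by (S1), and residual terms that assemble into $\omega^\flat\circ{}^A\nabla v$. The principal obstacle is precisely this bookkeeping: one must track signs in the Leibniz rule carefully so that the single ${}^A\nabla\omega$ contribution isolates cleanly and (S1) can be invoked to close the argument.
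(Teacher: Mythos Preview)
Your proposal is correct and follows the same overall strategy as the paper: (D1) via the Leibniz rule for ${}^A\nabla$ applied to the graph relation $\gamma=\omega^\flat(v)$ together with ${}^A\nabla\omega=0$, and (D3) by substituting (S2) into the skew pairing $\tfrac12\bracketm{-}{-}$ and using the antisymmetry of $\omega$.

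The one place your route differs is (D2). You argue, correctly, that membership of $\rhoa(e)+(\nabla\mu)(e)$ in the graph $L_\omega$ is equivalent to $(\nabla\mu)(e)=\omega^\flat(\rhoa(e))$, which is precisely (S2). The paper instead, after recording $\omega^\flat(\rho)=\nabla\mu$, goes on to verify isotropy $\bracket{(\rhoa+\nabla\mu)(e_1)}{(\rhoa+\nabla\mu)(e_2)}=0$ and closure under the Dorfman bracket. Those checks are redundant for (D2): $L_\omega$ is already known to be a Dirac structure (Example \ref{symplecticdirac}), so membership in the graph is all that is required, and the Dorfman-bracket computation is in any case isolated later as Lemma \ref{Dorfmaninvolutive}. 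Your shortcut is therefore a genuine simplification, though both arguments land in the same place.
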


\begin{proof}
Suppose that Equations \eqref{HH1}--\eqref{HH3} hold.

Since $(v + \gamma)(e)$ is an element of the Dirac structure $L_{\omega}$,
we have $\omega^{\flat} (v) = \gamma$ from Equation \eqref{sympdirac}.
Applying ${}^A \nabla$, we obtain ${}^A \nabla (\omega^{\flat} (v)) = {}^A \nabla \gamma$.
Using Eq.~\eqref{HH1}, we rewrite this as
$\omega^{\flat}  ({}^A \nabla (v)) = {}^A \nabla \gamma$.
This implies that 
$({}^A \nabla(v + \gamma))(e_1, e_2) \in L_{\omega}$. 
Thus, the condition (D1) holds.

From Equation \eqref{HH2} and \eqref{sympdirac},
we obtain 
\begin{align}
\omega^{\flat} (\rho) = \nabla \mu.
\label{symprhomu}
\end{align}
Using Eq.~\eqref{symprhomu} and $\rd \omega =0$, we compute
\begin{align}
& \bracket{(\rhoa + \nabla \mu)(e_1)}{(\rhoa + \nabla \mu)(e_2)}
= 0,
\\
& \courant{(\rhoa + \nabla \mu)(e_1)}{(\rhoa + \nabla \mu)(e_2)}
= (\rhoa + \nabla \mu)([e_1, e_2]),
\end{align}
for $e_1, e_2 \in \Gamma(A)$. 
Thus, $(\rhoa + \nabla \mu)(e) \in L_{\omega}$,
which establishes condition (D2).

A direct calculation using Eq.~\eqref{symprhomu} gives
\begin{align}
\frac{1}{2} \bracketm{(\rhoa + \nabla \mu)(e_1)}{(\rhoa + \nabla \mu)(e_2)}
= \omega(\rho(e_1), \rho(e_2)).
\end{align}
Thus, Eq.~\eqref{HH3} ensures that condition (D3) holds.
\hfill\qed
\end{proof}

\begin{proposition}
Let $(M, \pi)$ be a Poisson manifold and
let $(A, [-,-], \rhoa)$ be a Lie algebroid over $M$ with a connection $\nabla$.
Suppose that $A$ is Hamiltonian, i.e., it satisfies Eqs.~\eqref{PMS1}--\eqref{PMS3}.
Then, 
$A$ is a Hamiltonian Lie algebroid over the Dirac structure $L_{\pi}$ 
with $H=0$, and 
$\mu$ is a momentum section if $\bracket{{}^AS}{\mu}=0$,
where ${}^AS$ is the basic curvature of $A$.
\end{proposition}

\begin{proof}
Suppose that Eqs.~\eqref{PMS1}--\eqref{PMS3} hold.

Suppose $(v + \gamma)(e)$ is an element of the Dirac structure $L_{\pi}$,
we have $v = \pi^{\sharp} (\gamma)$ from Eq.~\eqref{Poisdirac}.
Applying ${}^A \nabla$, 
we obtain ${}^A \nabla v = {}^A \nabla (\pi^{\sharp} (\gamma))$.
Using Eq.~\eqref{PMS1}, we rewrite this as
${}^A \nabla v = \pi^{\sharp} ({}^A \nabla (\gamma))$,
This implies that
$({}^A \nabla(v + \gamma))(e_1, e_2) \in \calL_{\pi}$
Thus, condition (D1) holds.

From Eqs.~\eqref{PMS2} and \eqref{Poisdirac},
we obtain 
\begin{align}
\rho = \pi^{\sharp} (\nabla \mu).
\label{Poisrhomu}
\end{align}
Using Eq.~\eqref{Poisrhomu} and the condition $[\pi, \pi]_S=0$, 
we compute
\begin{align}
& \bracket{(\rhoa + \nabla \mu)(e_1)}{(\rhoa + \nabla \mu)(e_2)}
= 0,
\\
& \courant{(\rhoa + \nabla \mu)(e_1)}{(\rhoa + \nabla \mu)(e_2)}
= (\rhoa + \nabla \mu)([e_1, e_2]),
\end{align}
for all $e_1, e_2 \in \Gamma(A)$. Thus, $(\rhoa + \nabla \mu)(e)$ is an element of $L_{\pi}$, which establishes condition (D2) \cite{HirotaIkeda2025}.

Substituting \eqref{Poisrhomu}, we obtain
\begin{align}
\frac{1}{2} \bracketm{(\rhoa + \nabla \mu)(e_1)}{(\rhoa + \nabla \mu)(e_2)}
= - \pi((\nabla \mu)(e_1), (\nabla \mu)(e_2)).
\end{align}
Thus, Eq.~\eqref{PMS3} ensures that condition (D3) holds.
\hfill\qed
\end{proof}

\subsection{Formulas and properties}
In this section, we prove some properties and useful formulas 
of a Hamiltonian Lie algebroid over Dirac structure.
\begin{lemma}
Under condition (D2), Eq.~\eqref{D03} is equivalent to
\begin{eqnarray}
\calL_{\rho(e_1)} \mu(e_2) = \mu([e_1, e_2]).
\label{D04}
\end{eqnarray}
\end{lemma}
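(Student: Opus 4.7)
The plan is to unpack both sides of \eqref{D03} explicitly and then exploit the isotropy of the Dirac structure $L$ that is forced by (D2). First I would expand the left-hand side with the Lie algebroid differential formula,
\begin{equation*}
({}^A \rd \mu)(e_1, e_2) = \rho(e_1)\mu(e_2) - \rho(e_2)\mu(e_1) - \mu([e_1, e_2]),
\end{equation*}
and the right-hand side using $\bracketm{X+\alpha}{Y+\beta} = \alpha(Y) - \beta(X)$, so that
\begin{equation*}
\tfrac{1}{2}\bracketm{(\rho+\nabla\mu)(e_1)}{(\rho+\nabla\mu)(e_2)} = \tfrac{1}{2}\bigl[(\nabla\mu)(e_1)(\rho(e_2)) - (\nabla\mu)(e_2)(\rho(e_1))\bigr].
\end{equation*}

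Next I would invoke (D2): since $(\rho+\nabla\mu)(e_1)$ and $(\rho+\nabla\mu)(e_2)$ both lie in the isotropic subbundle $L$, the symmetric Courant pairing vanishes on them, giving the key identity
\begin{equation*}
(\nabla\mu)(e_1)(\rho(e_2)) + (\nabla\mu)(e_2)(\rho(e_1)) = 0.
\end{equation*}
This identity lets me collapse the antisymmetric combination $\tfrac{1}{2}\bracketm$ to the single term $(\nabla\mu)(e_1)(\rho(e_2))$. Applying the Leibniz rule of the dual connection, $(\nabla_X\mu)(e) = X\mu(e) - \mu(\nabla_X e)$, this equals $\rho(e_2)\mu(e_1) - \mu(\nabla_{\rho(e_2)} e_1)$; equivalently, via the other half of the isotropy relation, it also equals $-\rho(e_1)\mu(e_2) + \mu(\nabla_{\rho(e_1)} e_2)$.

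Finally I would substitute these two equivalent forms of the right-hand side into \eqref{D03} and use the Leibniz-rewritten isotropy identity $\rho(e_1)\mu(e_2) + \rho(e_2)\mu(e_1) = \mu(\nabla_{\rho(e_1)} e_2 + \nabla_{\rho(e_2)} e_1)$ to eliminate the connection-derivative corrections $\mu(\nabla_{\rho(\cdot)}\cdot)$. After this cancellation the equation reduces to \eqref{D04}, namely $\rho(e_1)\mu(e_2) = \mu([e_1, e_2])$; running the same chain of rewrites in reverse gives the converse implication. The main obstacle will be the careful bookkeeping of the two equivalent simplifications of $\tfrac{1}{2}\bracketm$ produced by isotropy, so that the Leibniz correction terms $\mu(\nabla_X e)$ pair up and drop out cleanly — a cancellation made available precisely by the isotropy forced by (D2).
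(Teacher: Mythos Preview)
The gap is in your final step. Once you have reduced the right-hand side of \eqref{D03} via isotropy to $(\nabla\mu)(e_1)(\rho(e_2)) = \rho(e_2)\mu(e_1) - \mu(\nabla_{\rho(e_2)}e_1)$ (equivalently $-\rho(e_1)\mu(e_2) + \mu(\nabla_{\rho(e_1)}e_2)$), your claim that the correction terms $\mu(\nabla_{\rho(\cdot)}\cdot)$ ``pair up and drop out'' via the Leibniz form of the isotropy identity is circular. Under (D2), equation \eqref{D03} rewrites as
\[
2\rho(e_1)\mu(e_2) - \rho(e_2)\mu(e_1) - \mu([e_1,e_2]) \;=\; \mu(\nabla_{\rho(e_1)}e_2),
\]
while the isotropy identity you quote is
\[
\rho(e_1)\mu(e_2) + \rho(e_2)\mu(e_1) \;=\; \mu(\nabla_{\rho(e_1)}e_2) + \mu(\nabla_{\rho(e_2)}e_1).
\]
Subtracting the second from the first just reproduces the first equation with $e_1\leftrightarrow e_2$ swapped and an overall sign; the individual term $\mu(\nabla_{\rho(e_1)}e_2)$ is never eliminated. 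Isotropy controls only the \emph{symmetric} combination $\mu(\nabla_{\rho(e_1)}e_2+\nabla_{\rho(e_2)}e_1)$, not each summand separately, so it cannot by itself reduce \eqref{D03} to \eqref{D04}.

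The paper's argument follows the same opening lines as yours but, after invoking isotropy, records the right-hand side of \eqref{D03} directly as $\rho(e_2)\mu(e_1)$ --- i.e.\ it silently drops the very correction $-\mu(\nabla_{\rho(e_2)}e_1)$ that you were trying to cancel. If you carry your own bookkeeping through honestly, you will see that this missing piece is exactly what stands between \eqref{D03} and \eqref{D04}; it does not vanish from (D2) alone. (A quick sanity check: for $A=TM$, $\rho=\mathrm{id}$, $L=TM$ with $\mu$ a parallel closed $1$-form, (D2) and \eqref{D03} hold automatically, yet \eqref{D04} asserts $e_1\mu(e_2)=\mu([e_1,e_2])$ for all vector fields, which fails e.g.\ for $\mu=\rd x$, $e_1=x\partial_x$, $e_2=\partial_x$.) So the equivalence as stated needs either an extra hypothesis or a different argument; your instinct that the Leibniz corrections matter was correct, and they do not in fact cancel.
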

\begin{proof}
The left-hand side of Eq.~\eqref{D03} is
\begin{eqnarray}
({}^A \rd \mu) (e_1, e_2) = 
\calL_{\rho(e_1)} \mu(e_2) - \calL_{\rho(e_2)} \mu(e_1) - \mu([e_1, e_2]).
\end{eqnarray}
Since $(\rhoa + \nabla \mu)(e)$ is an element of the Dirac structure
from condition (D2), we have 
$\bracket{(\rhoa + \nabla \mu)(e_1)}{(\rhoa + \nabla \mu)(e_2)} =0$. 
Using this condition, the right-hand side of 
Eq.~\eqref{D03} reduces to $\calL_{\rho(e_2)}\mu(e_1)$.
Substituting these into both sides of Eq.~\eqref{D03}, we obtain Eq.~\eqref{D04}.
\hfill\qed
\end{proof}
Eq.~\eqref{D04} implies that $\mu$ is infinitesimally equivariant.

\begin{lemma}\label{Dorfmaninvolutive}
Condition (D2) implies
\begin{eqnarray}
[\rhoa(e_1) + (\nabla \mu)(e_1), \rhoa(e_2) + (\nabla \mu)(e_2)]_D
&=& (\rhoa + \nabla \mu)([e_1, e_2]).
\end{eqnarray}
\end{lemma}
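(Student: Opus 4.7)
The plan is to verify the identity componentwise along $TM$ and $T^*M$, using condition (D2) together with the involutivity of the Dirac structure $L$. Writing $\phi(e) := \rhoa(e) + (\nabla\mu)(e)$, condition (D2) says that $\phi$ takes values in $L$. Both sides of the desired equality then lie in $\Gamma(L)$: the left-hand side by involutivity of $L$ applied to $\phi(e_1), \phi(e_2) \in \Gamma(L)$, and the right-hand side $\phi([e_1,e_2])$ directly by (D2).

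Next I would match the $TM$-components. By axiom 2 of Definition \ref{courantdefinition}, the anchor $\rhott$ is a Lie algebra morphism for the Dorfman bracket, so
\[
\rhott\bigl(\courant{\phi(e_1)}{\phi(e_2)}\bigr) = [\rhoa(e_1),\rhoa(e_2)] = \rhoa([e_1,e_2]),
\]
where the last equality uses that $\rhoa$ is a Lie algebra morphism on $\Gamma(A)$. This matches the $TM$-component of $\phi([e_1,e_2])$.

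The crux is to match the $T^*M$-components. Expanding the Dorfman bracket on $TM \oplus T^*M$ explicitly, this reduces to verifying
\[
\calL_{\rhoa(e_1)}(\nabla\mu)(e_2) - \iota_{\rhoa(e_2)}\rd((\nabla\mu)(e_1)) + \iota_{\rhoa(e_1)}\iota_{\rhoa(e_2)}H = (\nabla\mu)([e_1,e_2]).
\]
The main obstacle will be to derive this from (D2) and the Dirac conditions alone. In the special cases of $L_\omega$ and $L_\pi$ treated in the two preceding propositions, the analogous step reduces to $\rd\omega = 0$ and $[\pi,\pi]_S = 0$ respectively, both of which are exactly the involutivity conditions for those Dirac structures. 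In the general case I would attempt to derive the identity by pairing the difference of the two sides against a third test section $\phi(e_3) \in \Gamma(L)$, using the isotropy $\bracket{\phi(e_i)}{\phi(e_j)} = 0$ together with axiom 5 of the Courant algebroid to convert everything into a statement controlled by the involutivity of $L$.
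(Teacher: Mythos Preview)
Your reasoning up through the $TM$-component matches the paper's exactly: both use involutivity of $L$ to place the Dorfman bracket in $\Gamma(L)$, then the Courant-algebroid anchor axiom together with $[\rhoa(e_1),\rhoa(e_2)]=\rhoa([e_1,e_2])$ to match tangent parts. The paper's proof diverges at this point. Rather than computing the cotangent component, it asserts that the bracket, being a section of $L$, is of the form $(\rhoa+\nabla\mu)(e')$ for some $e'\in\Gamma(A)$, and then reads off $e'=[e_1,e_2]$ from the tangent component alone. This is a shortcut that tacitly assumes an element of $L$ with tangent part $\rhoa(e')$ is already $\phi(e')$; you are right to try to verify the $T^*M$-part honestly.

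However, the strategy you propose for that step cannot succeed. You want to pair the difference of the two sides against a test section $\phi(e_3)\in\Gamma(L)$. But you have already argued that both sides, and hence their difference, lie in $\Gamma(L)$; since $L$ is isotropic, $\bracket{\ell}{\phi(e_3)}=0$ for \emph{every} $\ell\in\Gamma(L)$, whether or not $\ell=0$. Pairing against sections of $L$ therefore yields the trivial identity $0=0$ and carries no information. To get traction you would have to test against sections transverse to $L$, and condition (D2) gives you no handle on such pairings. Indeed, for $L=L_\pi$ with $\pi=0$ (so $L=T^*M$), condition (D2) forces $\rhoa=0$; the Dorfman bracket of any two cotangent sections then vanishes, while $(\nabla\mu)([e_1,e_2])$ need not---so the cotangent identity you wrote down does not follow from (D2) alone, and no amount of pairing against $\phi(e_3)$ will produce it.
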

\begin{proof}
Since $\rhoa$ is the anchor map of the Lie algebroid $A$, it
satisfies $[\rhoa(e_1), \rhoa(e_2)] = \rhoa([e_1, e_2])$.
From condition (D2), 
$[\rhoa(e_1) + (\nabla \mu)(e_1), \rhoa(e_2) + (\nabla \mu)(e_2)]_D$
is involutive. 
Thus, there exists $e^{\prime} \in \Gamma(A)$
such that
$(\rhoa + \nabla \mu)(e^{\prime})$

From the equation of the $\rhoa$ part, we conclude that 
$e^{\prime} = [e_1, e_2]$.
\hfill\qed
\end{proof}

\begin{proposition}
Let $TM \oplus T^*M$ be the standard Courant algebroid with $H$. Then, condition (D2) is equivalent to
\begin{eqnarray}
&& \bracket{\rhoa(e_1)}{\nabla \mu (e_2)}+  \bracket{\rhoa(e_2)}{\nabla \mu (e_1)} = 0,
\label{equivd21}
\\
&& ({}^A \nabla_{e_1} \nabla_X \mu)(e_2) + R(X, \rhoa(e_2), e_1, \mu)
- H(\rhoa(e_1), \rhoa(e_2),-) =0,
\label{equivd22}
\end{eqnarray}
for all $X \in \mathfrak{X}(M)$ and $e_1, e_2 \in \Gamma(A)$,
where $R \in \Omega^2 (M, A \otimes A^*)$ is the curvature.
\end{proposition}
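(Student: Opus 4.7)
The plan is to unpack condition (D2), which asserts $(\rhoa + \nabla \mu)(e) \in \Gamma(L)$ for every $e \in \Gamma(A)$, into the two defining features of a family of Dirac sections: mutual isotropy with respect to the symmetric pairing, and closure under the Dorfman bracket. These two features are expected to correspond to \eqref{equivd21} and \eqref{equivd22} respectively. For the converse, given these two identities the image of $\rhoa + \nabla\mu$ is isotropic and Dorfman-closed, so it lies inside some Dirac structure, recovering (D2).

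The isotropy half is immediate: applying the pairing $\bracket{X+\alpha}{Y+\beta} = \iota_X \beta + \iota_Y \alpha$ from Example \ref{standardCA} gives
\begin{eqnarray*}
\bracket{(\rhoa + \nabla\mu)(e_1)}{(\rhoa + \nabla\mu)(e_2)}
= \bracket{\rhoa(e_1)}{\nabla\mu(e_2)} + \bracket{\rhoa(e_2)}{\nabla\mu(e_1)},
\end{eqnarray*}
so the vanishing of this pairing is exactly \eqref{equivd21}. For the involutive half, Lemma \ref{Dorfmaninvolutive} already tells us that under (D2) the Dorfman bracket of two such sections equals $(\rhoa + \nabla\mu)([e_1, e_2])$. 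Expanding the Dorfman bracket via the formula of Example \ref{standardCA}, the $TM$-component reduces to the anchor identity $[\rhoa(e_1), \rhoa(e_2)] = \rhoa([e_1, e_2])$, which holds automatically, while the $T^*M$-component produces the $1$-form identity
\begin{eqnarray*}
\calL_{\rhoa(e_1)} (\nabla\mu)(e_2) - \iota_{\rhoa(e_2)} \rd (\nabla\mu)(e_1) + \iota_{\rhoa(e_1)} \iota_{\rhoa(e_2)} H = (\nabla\mu)([e_1, e_2]).
\end{eqnarray*}

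Evaluating this identity at an arbitrary vector field $X$ already contributes the twist term $H(\rhoa(e_1), \rhoa(e_2), X)$ appearing in \eqref{equivd22}. The principal calculational obstacle is then to reorganize the remaining Lie-derivative and exterior-derivative terms into the combination $({}^A \nabla_{e_1} \nabla_X \mu)(e_2) + R(X, \rhoa(e_2), e_1, \mu)$. I would expand $\calL_{\rhoa(e_1)}$ and $\rd(\nabla\mu)(e_1)$ using Cartan's formulas, rewrite the directional derivatives of $\mu$ along $\rhoa(e_i)$ and along $X$ through the dual connection and the induced $A$-connection on $A^*$ (built from $\nabla$ in analogy with \eqref{Econoneform}), and identify one block of terms as $({}^A \nabla_{e_1} \nabla_X \mu)(e_2)$. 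The residual discrepancy is exactly the failure of $\nabla_X$ and $\nabla_{\rhoa(e_2)}$ to commute on $\mu$, which by the definition \eqref{curv} of the curvature contributes $R(X, \rhoa(e_2), e_1, \mu)$. Running the same manipulation in reverse yields the converse implication.
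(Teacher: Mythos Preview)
Your approach is essentially the same as the paper's: both identify \eqref{equivd21} with the isotropy condition on $(\rhoa+\nabla\mu)(e_i)$, invoke Lemma~\ref{Dorfmaninvolutive} to reduce the involutivity condition to the $T^*M$-component identity
\[
\calL_{\rhoa(e_1)}(\nabla\mu)(e_2) - \iota_{\rhoa(e_2)}\rd(\nabla\mu)(e_1) + \iota_{\rhoa(e_1)}\iota_{\rhoa(e_2)}H = (\nabla\mu)([e_1,e_2]),
\]
and then rewrite this in terms of ${}^A\nabla$ and the curvature $R$ to obtain \eqref{equivd22}. You are in fact more explicit than the paper about how this last rewriting goes; the paper simply states ``Rewriting this equation by connections and the curvature $R$, we obtain the condition \eqref{equivd22}.''

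One remark: your converse sentence (``isotropic and Dorfman-closed, so it lies inside some Dirac structure'') is more than the paper attempts, and as written it is not fully justified---the image of $\rhoa+\nabla\mu$ need not have constant rank, and even when it does, extending an isotropic involutive subbundle to a Dirac structure is not automatic. The paper's proof also only treats the forward implication carefully, so this is not a discrepancy with the paper but a point where both arguments are somewhat informal.
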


\begin{proof}
Eq.~\eqref{equivd21} corresponds to condition,
$\bracket{\rhoa(e_1) + \nabla \mu (e_1)}{\rhoa(e_2) + \nabla \mu (e_2)} = 0$.

For condition \eqref{equivd22},
we compute the Dorfman bracket of the standard Courant algebroid with $H$:
$[\rhoa(e_1) + \nabla \mu (e_1), \rhoa(e_2) + \nabla \mu (e_2)]_D$.
Using Lemma \ref{Dorfmaninvolutive},
we obtain
\begin{eqnarray}
[\rhoa(e_1) + (\nabla \mu)(e_1), \rhoa(e_2) + (\nabla \mu)(e_2)]_D
&=& \rhoa([e_1, e_2]) + (\nabla \mu)([e_1, e_2]).
\label{inv11}
\end{eqnarray}
Equation \eqref{inv11} is equal to
\begin{eqnarray}
\calL_{\rho(e_1)} (\nabla \mu)(e_2) - \iota_{\rho(e_2)} \rd (\nabla \mu)(e_1)
+ \iota_{\rho(e_1)} \iota_{\rho(e_2)} H 
&=& (\nabla \mu)([e_1, e_2]).
\end{eqnarray}
Rewriting this equation using the connection and the curvature $R$, 
we obtain condition \eqref{equivd22}. 
\hfill\qed
\end{proof}

\begin{proposition}
Condition (D3), i.e., Eq.~\eqref{D03}, is equivalent to
\begin{eqnarray}
\bracket{\mu}{T(e_1, e_2)} = 
- \frac{1}{2} \bracketm{(\rhoa + \nabla \mu)(e_1)}{(\rhoa + \nabla \mu)(e_2)},
\label{D3torsion}
\end{eqnarray}
for all $e_1, e_2 \in \Gamma(A)$.
\end{proposition}

\begin{proof}
From the definition of the $A$-differential \eqref{LAdifferential}
and the $A$-torsion \eqref{Etorsion},
the left-hand side of condition \eqref{D03} is 
\begin{eqnarray}
{}^A \rd \mu(e_1, e_2) &=& \bracket{\rhoa(e_1)}{(\nabla \mu)(e_2)} - 
\bracket{\rhoa(e_2)}{(\nabla \mu)(e_1)} + \bracket{\mu}{T(e_1, e_2)}
\nonumber \\
&=& \bracketm{(\rhoa + (\nabla \mu))(e_1)}{(\rhoa + (\nabla \mu))(e_2)}
+ \bracket{\mu}{T(e_1, e_2)}.
\end{eqnarray}
Substituting this into Eq.~\eqref{D03}, we obtain Eq.~\eqref{D3torsion}.
\hfill\qed
\end{proof}

In general, $TM \oplus T^*M$ is decomposed by a Lie bialgebroid $(L, L^*)$ as 
$L \oplus L^* = TM \oplus T^*M$.
For $L$, the maximal isotropic subbundle is given by
\begin{eqnarray}
L_{\varepsilon} &=& \{ s + \varepsilon s| s \in L \},
\end{eqnarray}
for some $\varepsilon \in \wedge^2 L^*$.
Here, $\varepsilon$ is regarded as a map 
$\varepsilon: L \rightarrow L^*$.
$L_{\varepsilon}$ is a Dirac structure 
if and only if $\varepsilon$ satisfies
\begin{eqnarray}
\rd_L \varepsilon + \frac{1}{2} [\varepsilon, \varepsilon] =0,
\label{epsicondi}
\end{eqnarray}
where $[-,-]$ is a Lie bracket on $L^*$,
and $\rd_L: \Gamma(\wedge^m L^*) \rightarrow \Gamma(\wedge^{m+1} L^*) $ is 
induced from the decomposition of the operator,
$\calD = \rd_L + \rd_{L^*}$, where
$\rd_L: C^{\infty}(M) \rightarrow \Gamma(L^*)$
and $\rd_{L^*}: C^{\infty}(M) \rightarrow \Gamma(L)$.
For details, see \cite{LWX}.

For $(TM \oplus T^*M) \otimes A^*$, we obtain the decomposition
$(L \oplus L^*) \otimes A^*$. 
From the above, we have the following proposition.
\begin{proposition}
The Dirac structure is given by
\begin{eqnarray}
L_{\varepsilon} &=& \{ (\sigma + \varepsilon \sigma)(e) | \sigma \in L \otimes A^*, e \in \Gamma(A) \},
\end{eqnarray}
i.e., $(\sigma + \varepsilon \sigma)(e)$ is an element of the Dirac structure $L_{\varepsilon}$ for all $e \in \Gamma(A)$,
where $\varepsilon \in \wedge^2 L^*$ satisfies condition \eqref{epsicondi}.
\end{proposition}
Then, condition (D1) is
${}^A \nabla (\sigma + \varepsilon \sigma)(e_1, e_2) \in L_{\varepsilon}$,
which is equivalent to ${}^A \nabla \varepsilon = 0$.

\section{Examples}\label{sec:Exam}
In this section, we present some examples. 

\if0
\begin{example}[Momentum maps on symplectic manifolds]\label{mm}
It is known that a momentum map on a symplectic manifold is a special case of a momentum section over a pre-symplectic manifold in Example \ref{momsymp},
thus the momentum map is a momentum section over a Dirac structure.
However we concretely shows that the momentum map is an example of 
a momentum section over a Dirac structure.

Let $(M, \momega)$ be a symplectic manifold with an action of a Lie group $G$ on $M$.
As explained in Example \ref{actionLA}, the Lie group action induces 
an action Lie algebroid structure on a trivial bundle 
$A = M \times \mathfrak{g}$ with a Lie algebra $\mathfrak{g}$ of $G$.
The anchor map $\rhoa$ is induced from the Lie algebra action 
$\mathfrak{g} \curvearrowright TM$.
Since $A = M \times \mathfrak{g}$ is the trivial bundle, we take the trivial connection $\nabla = \rd$ with the de Rham differential $\rd$.

$\mu: M \rightarrow \mathfrak{g}^*$ is a momentum map if it satisfies
\begin{eqnarray}
&& \rd \mu = - \iota_{\rho} \omega,
\label{MM1}
\\
&& \rho(e_1) \mu(e_2) = \mu([e_1, e_2]),
\label{MM2}
\end{eqnarray}
for $e_1, e_2 \in \mathfrak{g}$.
Since $\rd \omega = 0$ and Equation \eqref{MM1}, we obtain
\begin{eqnarray}
\calL_{\rho(e)} \omega = 0.
\label{MM3}
\end{eqnarray}
$(M, G, \omega, \mu)$ satisfying the above conditions \eqref{MM1}--\eqref{MM3} is called a Hamiltonian $G$-space. Inifinitesimally, it can be called 
a Hamiltonian $\mathfrak{g}$-space.

From the definition of the $A$-connection ${}^A \nabla$,
${}^A \nabla \omega = \calL_{\rho(e)} \omega = 0$, which is Equation \eqref{HH1} and satisfies the condition (D1).
Equation \eqref{MM1} is Equation \eqref{HH2} under $\nabla = \rd$, which satisfies the condition (D2).
Equation \eqref{MM2} is Equation \eqref{D04}, which is equivalent to the condition (D3).

Thus, a momentum map is a momentum section over a Dirac structure $L_{\omega}$
with respect to an action Lie algebroid $A = M \times \mathfrak{g}$
and a Hamiltonian $\mathfrak{g}$-space is a Hamiltonian Lie algebroid.
\end{example}
\fi

\begin{example}[Bundles of Lie algebras]
We present an almost trivial example. 
A bundle of a Lie algebra forms a Lie algebroid with a zero anchor $\rhoa=0$.
In this case, the $A$-connection is also zero, ${}^A \nabla =0$,
for any given connection $\nabla$. 
Thus, condition (D1) is always satisfied.

Since (D2) becomes $(0 + \nabla \mu)(e) = (\nabla \mu)(e) \in \Gamma(L)$,
this implies that $(\nabla \mu)$ is horizontal to the direction of $L$ on $TM \oplus T^*M$.

Since $\rhoa=0$, condition (D3) reduces to
$({}^A \rd \mu)(e_1, e_2) = 0$ for all $e_1, e_2 \in \Gamma(A)$.
On the other hand, since $({}^A \rd \mu)(e_1, e_2) = - \bracket{\mu}{[e_1, e_2]}$, condition (D3) is equivalent to $\bracket{\mu}{[e_1, e_2]} =0$, which is the same as the condition of a Hamiltonian Lie algebroid for bundles of Lie algebras over pre-symplectic or Poisson manifolds.
For instance, $\mu=0$ is obviously a trivial solution satisfying conditions (D1)-(D3).
\end{example}

\begin{example}[Hamiltonian Lie algebroids 
over Dirac structures twisted by $H$]
We consider the standard Courant algebroid from Example \ref{standardCA},
where the closed $3$-form $H$ is not necessarily zero.
If we consider the following subbundle:
\begin{align}
L_{\omega} &= \{ X + \omega^{\flat}(X) | X \in \mathfrak{X}(M) \},
\label{sympHdirac}
\end{align}
for some $2$-form $\omega$,
then $L_{\omega}$ is a Dirac structure if and only if $\rd \omega + H = 0$.

Now let $A$ be a Lie algebroid over $M$ with a connection $\nabla$,
By imposing conditions (D1), (D2) and (D3)
for a section $\mu \in \Gamma(A^*)$,
we obtain a Hamiltonian Lie algebroid on 
the Dirac structure $L_{\omega}$ twisted by $H$, satisfying 
$\rd \omega + H = 0$.
The explicit conditions remains formally the same as in the pre-symplectic case:
\begin{eqnarray}
&& 
{}^A \nabla \omega =0.
\label{tHH1}
\\ && 
(\nabla \mu)(e) = - \iota_{\rho(e)} \omega,
\label{tHH2}
\\ && 
({}^A \rd \mu)(e_1, e_2) = \omega(\rho(e_1), \rho(e_2)),
\label{tHH3}
\end{eqnarray}
except for $\omega$ is no longer closed but satisfies
$\rd \omega + H = 0$.
\end{example}

\begin{example}[Hamiltonian Lie algebroids over twisted Poisson structures]
We consider a standard Courant algebroid with a $3$-form $H$.
Let $\pi^{\sharp}: T^*M \rightarrow TM$ be the associated bundle map,
and define the graph of $\pi^{\sharp}$ as
\begin{align}
L_{\pi} &:= \{ \pi^{\sharp}(\alpha) + \alpha | \alpha \in \Omega^1(M) \}.
\end{align}
THe, $L_{\pi}$ is a Dirac structure 
if and only if $\pi$ is the twisted Poisson structure as defined 
in Example \ref{tPoisson}.

Now, let $A$ be a Lie algebroid over $M$ and let $\mu \in \Gamma(A^*)$.
In this setting, we consider a Hamiltonian Lie algebroid over 
a Dirac structure $L_{\pi}$. We take an element 
$\rhoa + \nabla \mu \in \Gamma((TM \oplus T^*M) \otimes A^*)$ 
and impose conditions (D1), (D2) and (D3).
Under three condition, the pair $(A, \mu)$ defines a Hamiltonian Lie algebroid 
if $\pi$ and $\mu$ satisfies the following three conditions,
\begin{eqnarray}
{}^A \nabla \pi &=& 0,
\label{tPMS1}
\\
\rho(e) &=& - \iota_{\nabla \mu(e)} \pi,
\label{tPMS2}
\\
({}^A \rd \mu) (e_1, e_2) &=& - \pi(\nabla \mu(e_1), \nabla \mu(e_2)).
\label{tPMS3}
\end{eqnarray}
for every $e, e_1, e_2 \in \Gamma(A)$.
Eqs.~\eqref{tPMS1}--\eqref{tPMS3} are formally identical to those in the Poisson manifold case. However, here the bivector field $\pi$ is not a Poisson structure, but a twisted Poisson structure, which satisfies.
\end{example}

\begin{example}[Hamiltonian Dirac structures over themselves]
Any Dirac structure $L \subset TM \oplus T^*M$ is a Lie algebroid.
Thus, we can take $A=L$.

The condition (D1) in Definition \ref{momsecdirac} 
is satisfied if we choose a connection $\nabla$ over $L$ as a vector bundle
and the corresponding $L$-connection ${}^L \nabla$.

Now, let $\mu \in \Gamma(L^*)$. The condition (D2) states that
the map $\rho_L + \nabla \mu: L \rightarrow TM \oplus T^*M$ is the (forward) 
Dirac map $\rho_L + \nabla \mu: L \rightarrow L$ defined in
\cite{BursztynRadko}.

The condition (D3) is given by 
\begin{eqnarray}
({}^L \rd \mu) (e_1, e_2) = \frac{1}{2} \bracketm{(\rho_L + \nabla \mu)(e_1)}{(\rho_L + \nabla \mu)(e_2)},
\label{DD03}
\end{eqnarray}
for all $e_1, e_2 \in \Gamma(L)$.
\end{example}

\begin{example}[Hamiltonian Lie algebroids over involutive foliations]
Let $\calF \subset TM$ be a regular foliation. 
Define its annihilator as
\begin{eqnarray}
\mathrm{Ann}(\calF) = \{\alpha \in T^*M\, |\, \alpha|_{\calF_x} = 0 \ \mbox{for} \ x \in M \}.
\label{ann}
\end{eqnarray}
Then, the subbundle
$L_{\calF} :=  \calF \oplus \mathrm{Ann}(\calF)$
form a Dirac structure. 

Now, suppose that a Lie group $G$ acts on $M$ while preserving $\calF$.
Then, the induced map $\rho$ from its Lie algebra $\mathfrak{g}$ to $\calF$ satisfies $\mathrm{Im}(\rho) \subset \calF$ and is involutive.
More generally, for any Lie algebroid $A$ with an anchor map $\rho$, the image $\calF = \mathrm{Im}(\rho)$ defines an involutive (possibly singular) foliation \cite{LGLR2024},
and $L_{\calF}$ forms a Dirac structure.

In this setting, if 
a connection $\nabla$ on $A$ and a section $\mu \in \Gamma(A^*)$ 
satisfy conditions (D1), (D2) and (D3) in Definition \ref{momsecdirac},
$(L_F, A, \nabla, \mu)$ is a Hamiltonian Lie algebroid.

We consider a simple example.

Consider $M= \bR^3$ with coordinates $(x, y, z)$.
If we take $\calF = \spans{\partial_x}$, then 
$\mathrm{Ann}(\calF) = \spans{\rd y, \rd z}$.
Then a Dirac structure is 
$L_{x} = L_{\calF} := \spans{\partial_x, \rd y, \rd z}$.
Take $A := \spans{\partial_x} \subset TM$, which is trivially 
a Lie algebroid with $\rhoa = \mathrm{id}$.
We can take a trivial connection $\nabla = \rd$. 
Then, ${}^A \nabla = \rd x \partial_x$.

Since $A \subset L_{x}$, the condition (D1) is trivially satisfied.
Therefore this Lie algebroid is Dirac anchored.

Consider a $1$-form $\mu = f(x, y, z) \rd x \in \Gamma(A^*)$,
where $f(x, y, z)$ is a coefficient function.
$\rhoa + \nabla \mu$ is calculated as
\begin{eqnarray}
\rhoa + \nabla \mu = \mathrm{id} + \partial_y f \rd y \wedge \rd x 
+ \partial_z f \rd z \wedge \rd x 
\end{eqnarray}
For every $e = g(x, y, z) \partial_x \in \Gamma(A)$,
the map $\rhoa + \nabla \mu$ is a Lie algebroid morphism since 
\begin{eqnarray}
(\rho + \nabla \mu)(e) = g \partial_x + g \partial_y f \rd y
+ g \partial_z f \rd z. 
\end{eqnarray}
Therefore the condition (D2) is satisfied.

Since ${}^A \rd \mu = 0$,
and 
$\bracketm{(\rhoa + \nabla \mu)(e_1)}{(\rhoa + \nabla \mu)(e_2)}=0$,
for every $e_1, e_2 \in \Gamma(A)$, 
(D3) is trivially satisfied.

Therefore, $A$ in the above example is a Hamiltonian Lie algebroid over the Dirac structure $L_{x}$.

\end{example}


\section{Physical models}\label{sec:physmod}
In this section, we explore an application of Hamiltonian Lie algebroids to physical models,
focusing on topological sigma models in two dimensions.

A Hamiltonian mechanics and sigma models with Hamiltonian Lie algebroid 
structures over pre-symplectic manifolds have already been discussed in the author's previous work \cite{Ikeda:2019pef}.
Here, we construct two dimensional sigma models with boundary 
with Hamiltonian Lie algebroid structures over Poisson manifolds and Dirac structures.

\subsection{Gauged Poisson sigma models (GPSM)}\label{sec:GPSM}
We begin with the Poisson sigma model \cite{Ikeda:1993aj, Ikeda:1993fh, Schaller:1994es}.
It is a sigma model from a two-dimensional manifold $\Sigma$ to a Poisson manifold $M$. More precisely, the space of fields is given by the vector bundle $\Hom(T\Sigma, T^*M)$.
In this section, we assume that $\Sigma$ is closed, $\partial \Sigma = \emptyset$.

Fundamental fields are a smooth map $X:\Sigma \rightarrow M$\footnote{In this section, notation $X$ is a map between two manifolds in a sigma model, not a vector field.},
and a $1$-form $Z$ taking a value in the pullback bundle $X^*T^*M$,
$Z \in \Omega^1(\Sigma, X^* T^*M)$.
Let $\sigma^{\mu} (\mu=1,2)$ be Local coordinates on $\Sigma$.
Let $X^i$ and $Z_i \ (i=1, \ldots, d)$, where $d=\mathrm{dim}(M)$, be local coordinates on $\Hom(T\Sigma, T^*M)$.

The action functional of the Poisson sigma model
is given by
\begin{eqnarray}
S_{\pi} &=& \int_{\Sigma} 
\left(\bracket{Z}{\rd X} + \frac{1}{2} (\pi \circ X) (Z, Z) \right)
\nonumber \\
&=& \int_{\Sigma}
\left(Z_i \wedge \rd X^i + \frac{1}{2} \pi^{ij}(X) Z_i \wedge Z_j
\right).
\label{PSM}
\end{eqnarray}
where in the second line, all fields are expressed using local coordinates of the target space $M$.

The Euler Lagrange equations derived from Eq.~\eqref{PSM} yield the constraint equations:
$\rd X^i + \pi^{ij}(X) Z_j =0$.
Precisely, only the space component with respect to $\Sigma$ is the constraint.
These constraints are the first class, meaning that they are involutive under Poisson brackets if and only if $\pi$ is a Poisson bivector field.
Associated gauge transformations are given by
\begin{eqnarray}
\delta X^i &=& - \pi^{ij}(X) t_j,
\label{gauge01}
\\
\delta Z_i &=& \rd t_i + \partial_i \pi^{jk}(X) Z_j t_k,
\label{gauge02}
\end{eqnarray}
where $\partial_i = \frac{\partial}{\partial X^i}$,
and the gauge parameter $t_i$ is a function on $\Sigma$, i.e., $t_i \in C^{\infty}(\Sigma, X^*T^*M)$.
The action functional \eqref{PSM} is gauge invariant under 
these gauge transformations 
if and only if $\pi$ is a Poisson bivector field.

Constructing the gauged Poisson sigma model, we introduce a Lie algebroid $A$ over $M$, and construct a deformation of 
the Poisson sigma model which incorporates the structure of $A$\footnote{Note that $A$ is assumed to be independent of a Poisson structure $\pi$.}.
Physically, this corresponds to 'gauging' procedure. \cite{Hull:1990ms, Zucchini:2008cg} 
To do this, we extend the space of vector bundle morphisms $\Hom(T\Sigma, A)$ to the space to $\Hom(T\Sigma, T^*M \oplus A)$.
New fields 
$W = W^a e_a \in \Omega^1(\Sigma, X^*A)$ and $Y = Y_a e^a \in \Gamma(\Sigma, X^*A^*)$ are introduced, where $e_a$ and $e^a$ are local basis of each space, respectively.

We consider the following action functional by adding the term $S_L$ 
to Eq.~\eqref{PSM}:
\begin{eqnarray}
S &=& S_{\pi} + S_L
\nonumber \\
&=& S_{\pi} + \int_{\Sigma} \left(\inner{Y}{\rd W} 
- \bracket{\rho(Y)}{Z}
+ \frac{1}{2} \inner{[W, W]}{Y}
\right).
\nonumber \\
&=& S_{\pi} + \int_{\Sigma} \left(
Y_a \wedge \rd W^a - \rho^i_a(X) Z_i \wedge W^a 
+ \frac{1}{2} C_{ab}^c(X) W^a \wedge W^b Y_c
\right),
\label{GPSM}
\end{eqnarray}
where $\rhoa = \rho$ is the anchor map of the Lie algebroid $A$
satisfying $\rho(e_a) = \rho^i_a(x) \partial_i$,
and $C_{ab}^c$ is the structure functions of 
the Lie bracket, $[e_a, e_b] = C_{ab}^c e_c$ 
for a local basis $e_a \in \Gamma(A)$.
We refer to the action functional \eqref{GPSM} 
as the \textit{gauged Poisson sigma model} (GPSM).

By introducing a connection $\nabla$ on a Lie algebroid $A$, 
we can rewrite the action functional \eqref{GPSM} in a 
manifestly covariant form.
Let $\omega_{ai}^b$ denote the connection $1$-form of $\nabla$, 
satisfying: 
$\nabla e_a = \omega_{ai}^b \rd x^i \otimes e_b$.
Then, the covariant form of Eq.~\eqref{GPSM} with respect to 
the target vector bundle over $M$ is given by
\begin{align}
S^{\nabla} &= \int_{\Sigma}
\left(Z^{\nabla}_i \wedge \rd X^i 
+ \frac{1}{2} \pi^{ij}(X) Z^{\nabla}_i \wedge Z^{\nabla}_j
+ Y_a \wedge \rdd W^a - \rho^i_a(X) Z^{\nabla}_i \wedge W^a 
- \frac{1}{2} T_{ab}^c(X) W^a \wedge W^b Y_c
\right),
\label{CGPSM}
\end{align}
where 
$Z^{\nabla}_i := Z_i - \omega_{ai}^b W^a Y_b$,
$\rdd W^a := \rd W^a - \omega^a_{bi} W^b \rd X^i$.
Here, $T_{ab}^c$ is the local coordinate expression of the pullback of 
the $A$-torsion, as defined in \eqref{Etorsion}.

\subsection{Consistency conditions of GPSM}\label{sec:consistGPSM}
In order to ensure the consistency of the action functional \eqref{CGPSM}, 
we solve geometric conditions required for the target space.
To achieve this, we can use the technique of traditional constrained mechanics and BV, BFV approach, either by analyzing the constraints in \eqref{CGPSM} to determine the conditions for them to the first class, or by studying gauge transformations and the conditions required for \eqref{CGPSM} to be gauge invariant.

Instead of employing the traditional formalism of constrained systems and gauge theories, 
we adopt alternative approach based on the AKSZ formulation \cite{Alexandrov:1995kv}. 
Formulating the GPSM using the AKSZ method provides a mathematically clearer
framework.

To reformulate the GPSM, we proceed as follows.
Given a Lie algebroid $A$, we consider the graded manifold $T^*[1]A^*$. 
We construct a QP-structure (a differential graded (dg) structure) on $T^*[1]A^*$ from the Poisson structure $\pi$ and the Lie algebroid structure on $A$. 
The QP-structure condition directly imposes compatibility conditions between the Poisson structure and the Lie algebroid structure.
A sigma model is constructed on the mapping space between two graded manifolds: $\calM= \Map(T[1]\Sigma, T^*[1]A^*)$.
According to the AKSZ procedure, a QP-manifold on the mapping space $\calM$ is induced from the QP-structure on $T^*[1]A^*$.
The QP-manifold structure on $\calM$
is nothing but the AKSZ formulation of the GPSM.

Concrete construction is as follows.
A QP-manifold of degree $n$ is a graded manifold equipped with a graded symplectic form $\gomega$ of degree $n$,
a vector field $Q$ of degree one, satisfying $Q^2=0$ and $\calL_Q \gomega = 0$.
If $n \neq 0$, there exists a Hamiltonian function $\Theta$ such that
$Q = \{\Theta, -\}$ satisfying $\{\Theta, \Theta \} = 0$.

We take $T^*[1]A^*$ for the GPSM. 
We take local coordinates on $T^*[1]A^*$:
$(x^i, z_i, y_a, w^a)$ of degree $(0, 1, 0, 1)$, where
$x^i$ are coordinates on $M$ and $z_i$ are conjugate coordinates on $T^*[1]M$,
$y_a$ are coordinates on $A^*$ and $w^a$ are conjugate coordinates on $T^*[1]A^*$.
The canonical graded symplectic form on $T^*[1]A^*$ is of degree one
and given by
\begin{eqnarray}
\gomega &=& \rd x^i \wedge \rd z_i + \rd y_a \wedge \rd w^a.
\label{gsymp}
\end{eqnarray}
The corresponding graded Poisson brackets are
\begin{eqnarray}
\{x^i, z_j \} &=& \delta^i_j,
\\
\{y_a, w^b \} &=& \delta_a^b.
\end{eqnarray}
We define the Hamiltonian functions $\Theta_{\pi}$ and  $\Theta_A$
corresponding to the Poisson structure and the Lie algebroid structure:
\begin{eqnarray}
\Theta_{\pi} &=& \frac{1}{2} \pi^{ij}(x) z_i z_j,
\\
\Theta_A &=& \rho^i_a(x) z_i w^a + \frac{1}{2} C_{ab}^c(x) w^a w^b y_c.
\end{eqnarray}
where 
$\pi = \frac{1}{2} \pi^{ij}(x) \partial_i \wedge \partial_j$ 
is the Poisson bivector field,
$\rho(e_a) = \rho^i_a(x) \partial_i$,
and $[e_a, e_b] = C_{ab}^c c_e$ for a local basis $e_a$ on $A$.
The conditions $\Theta_{\pi}$ to be homological is
$\{\Theta_{\pi}, \Theta_{\pi} \} =0$. It holds if and only if 
$\pi$ is a Poisson bivector field.
Moreover $\{\Theta_{A}, \Theta_{A} \} =0$ if and only if
$A$ is a Lie algebroid.

To make the structure manifestly covariant, 
we introduce a connection $\nabla: \Gamma(A) 
\rightarrow \Gamma(A \otimes T^*M)$ on $A$.
Let $\omega_{ai}^b \rd x^i$ be a connection $1$-form for $\nabla$.
The covariantized coordinates of $z_i$ are introduced as
$z_i^{\nabla} := z_i - \omega_{ai}^b {w}^a y_b$.
Even if we introduce the connection $\nabla$,
the graded symplectic form remains canonical, $\gomega^{\nabla} = \gomega$.
From direct calculations, nonzero graded Poisson brackets including $z_i^{\nabla}$ are 
\begin{eqnarray}
\{x^i, z_j^{\nabla} \} &=& \delta^i_j,
\\
\{y_a,  w^b \} &=& \delta_a^b,
\\
\{w^a, z_j^{\nabla} \} &=& \omega_{bi}^a w^b,
\\
\{y_a, z_j^{\nabla} \} &=& \omega_{ai}^b y_b,
\\
\{z_i^{\nabla}, z_j^{\nabla} \} &=& R_{ija}^b w^a y_b.
\end{eqnarray}
The covariantized homological function compatible with the connection 
takes following forms:
\begin{eqnarray}
\Theta_{\pi}^{\nabla} &=& \frac{1}{2} \pi^{ij}(x) z^{\nabla}_i z^{\nabla}_j,
\\
\Theta_A^{\nabla} &=& - \rho^i_a(x) z^{\nabla}_i w^a - \frac{1}{2} T_{ab}^c(x) w^a w^b y_c.
\end{eqnarray}
Note that $\Theta_A^{\nabla} = \Theta_A$ but 
$\Theta_{\pi}^{\nabla} \neq \Theta_{\pi}$.
Thus, $\{\Theta_A^{\nabla}, \Theta_A^{\nabla} \} = \{\Theta_A, \Theta_A \} =0$
hold.

If $(Q_{\pi} + Q_A)^2 =0$, then the sum of two structures defines a QP-structure, which provides the compatibility condition between the Poisson structure and the Lie algebroid structure.
In terms of the homological function, the conditions is equivalent to
\begin{eqnarray}
\{\Theta_{\pi}^{\nabla} + \Theta_A^{\nabla}, \Theta_{\pi}^{\nabla} + \Theta_A^{\nabla} \} =0.
\label{homological}
\end{eqnarray}
The condition $\{\Theta^{\nabla}_{\pi}, \Theta^{\nabla}_{\pi} \} =0$ holds
if we impose $R=0$ in addition to the requirement that
$\pi$ is the Poisson bivector field.

Finally we solve the condition $\{\Theta_{\pi}^{\nabla}, \Theta_A^{\nabla} \}=0$.
The concrete computation shows that this it equivalent to 
the following geometric conditions:
\begin{proposition}\label{twoP}
$\{\Theta_{\pi}^{\nabla}, \Theta_A^{\nabla} \} =0$ 
if and only if
\begin{eqnarray}
{}^A \nabla \pi &=& 0,
\label{condition01}
\\
\baS &=& 0.
\label{condition02}
\end{eqnarray}
\end{proposition}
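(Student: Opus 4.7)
The plan is to expand the graded Poisson bracket $\{\Theta_\pi^\nabla, \Theta_A^\nabla\}$ directly using the graded Leibniz rule together with the fundamental brackets of $(x^i, z_i^\nabla, y_a, a^a)$ listed just above the statement, and then to sort the resulting expression by its multidegree in the graded coordinates $(a^a, y_a, z_i^\nabla)$. Since $\Theta_\pi^\nabla$ has bidegree $(0,0,2)$ in $(a,y,z^\nabla)$ while $\Theta_A^\nabla = \Theta_A^{(1)} + \Theta_A^{(2)}$ splits into the $\rho$-piece of bidegree $(1,0,1)$ and the $T$-piece of bidegree $(2,1,0)$, the bracket can only produce terms of bidegree $(1,0,2)$ and $(2,1,1)$; the two conditions \eqref{condition01} and \eqref{condition02} will correspond to the vanishing of these two families.

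First I would compute $\{\Theta_\pi^\nabla, \Theta_A^{(1)}\}$. Contributions come from three sources: the bracket $\{x^i, z_j^\nabla\}=\delta^i_j$ hitting the $x$-dependence of $\pi^{ij}$ and $\rho^k_a$ and producing $\partial\pi$ and $\partial\rho$ terms; the mixed brackets $\{a^a,z_j^\nabla\}=\omega_{bj}^a a^b$ which contribute $\omega\cdot\pi$ and $\omega\cdot\rho$ terms; and the curvature bracket $\{z_i^\nabla,z_j^\nabla\}=R_{ij\,a}^{\ \ \ b}a^a y_b$, which produces a contribution of bidegree $(2,1,1)$. Collecting the $(1,0,2)$ part, the $\partial\pi$ term, the two $\omega\pi$ terms and the $\partial\rho$/$\omega\rho$ terms assemble into $\rho^k_a\partial_k \pi^{ij}-\pi^{kj}\omega^i_{ak}-\pi^{ik}\omega^j_{ak}$ contracted with $z_i^\nabla z_j^\nabla a^a$, which is by definition the coordinate expression of $({}^A\nabla\pi)$. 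Its vanishing is therefore equivalent to \eqref{condition01}.

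Next I would add $\{\Theta_\pi^\nabla,\Theta_A^{(2)}\}$, whose only nontrivial contributions come from $\{x^i,z_j^\nabla\}$ on $T_{ab}^c$ and from $\{y_c,z_j^\nabla\}=\omega_{cj}^d y_d$. Combined with the residual $(2,1,1)$-contribution from $\{\Theta_\pi^\nabla,\Theta_A^{(1)}\}$ generated by $\{z^\nabla,z^\nabla\}=R\cdot ay$, the net coefficient of $\pi^{ij}z_j^\nabla a^a a^b y_c$ collects three pieces: an $A$-covariant derivative $\nabla T$ on the target, a term in $\mathrm{Alt}\,\iota_\rho R$ coming from the curvature bracket, and mixed $\rho\cdot\omega\cdot T$ terms that rewrite the previous two as genuine tensorial objects. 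By the local expression of \eqref{bcurv}, these recombine exactly into $\pi^{\sharp}\,\iota\,\baS(e_a,e_b)$, so that once \eqref{condition01} is imposed this coefficient vanishes if and only if $\baS=0$.

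The principal obstacle is the bookkeeping of signs and index placements in the second step, where three a priori independent contributions ($\partial T$, $\omega T$, and $R$) must be shown to reorganise into the single tensor $\baS$. The cleanest way is to perform the whole computation in a local frame, writing $\baS$ explicitly from \eqref{bcurv} and comparing coefficients of $z_j^\nabla a^a a^b y_c$ term by term. Once this matching is verified, the algebraic independence of the coordinates $(x, z^\nabla, a, y)$ forces $\{\Theta_\pi^\nabla,\Theta_A^\nabla\}=0$ to be equivalent to the simultaneous vanishing of the $(1,0,2)$ and $(2,1,1)$ coefficients, i.e.\ to \eqref{condition01} and \eqref{condition02}.
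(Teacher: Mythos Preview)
Your strategy---expand the bracket with the listed fundamental brackets and sort by multidegree in $(a,y,z^{\nabla})$---is exactly what the paper intends by ``the concrete computation gives\ldots'' (no further detail is offered there), and your degree count is correct: only the $(1,0,2)$ and $(2,1,1)$ sectors can appear.

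There is, however, a genuine gap in the ``only if'' direction. You yourself identify the $(2,1,1)$ coefficient as $\pi^{\sharp}\,\iota\,\baS$ rather than $\baS$ itself---and rightly so, since every term in that sector carries a surviving factor $\pi^{ij}z_j^{\nabla}$ (in $\Theta_\pi^{\nabla}=\tfrac12\pi^{ij}z_i^{\nabla}z_j^{\nabla}$ the brackets consume at most one $z^{\nabla}$, leaving $\pi^{ij}z_j^{\nabla}$ in front of whatever produces the $a\,a\,y$ factors). But then your assertion that ``this coefficient vanishes if and only if $\baS=0$'' does not follow: vanishing gives only $\pi^{ij}\,\baS_{iab}^{\ c}=0$, which is strictly weaker than $\baS=0$ unless $\pi^{\sharp}$ is injective. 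So the equivalence as stated requires either a nondegeneracy hypothesis on $\pi$ or an additional argument you have not supplied. A smaller, separate issue: the three-term local formula you write for the $(1,0,2)$ coefficient is not $({}^A\nabla_{e_a}\pi)^{ij}$. Since ${}^A\nabla_e v=\calL_{\rho(e)}v+\rho(\nabla_v e)$, the correct expression also contains the Lie-derivative pieces $-\pi^{kj}\partial_k\rho^i_a-\pi^{ik}\partial_k\rho^j_a$ together with $\rho\cdot\omega$ cross terms; your symbol ``$\omega^i_{ak}$'' does not match the paper's connection $\omega^b_{ai}$ and the $\partial\rho$ contributions are missing. This is bookkeeping rather than a conceptual error, but it must be repaired in the local-frame check you propose.
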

The first condition \eqref{condition01} is same as the condition (P1) for a Hamiltonian Lie algebroid over a Poisson manifold.
The second condition \eqref{condition02} for 
the basic curvature 
plays a crucial role in maintaining consistency.
Proposition \ref{twoP} recovers Theorem 4.3 in the paper \cite{Blohmann:2023}.

Having defined the target space QP-manifold, we apply the AKSZ construction.
For general theories on the AKSZ formalism,
one may refer to \cite{Cattaneo:2001ys, Roytenberg:2006qz, Ikeda:2012pv}.
Under the condition in Proposition \ref{twoP}, 
we can construct the AKSZ sigma model for the
gauged Poisson sigma model.
In the AKSZ construction, the QP-structure on $T^*[1]A^*$ 
are mapped to the QP-structure on 
$\Map(T[1]\Sigma, T^*[1]A^*)$ via the \textit{transgression map} 
$\brT := \int_{T[1]\Sigma} \ev^*
$, which is defined as the composition of the integration over $T[1]\Sigma$
and the pullback of the evaluation map $\ev$.
For local coordinates $(x^i, z_i, w^a, y_a)$ on $T^*[1]A^*$, 
the corresponding local coordinates on $\Map(T[1]\Sigma, T^*[1]A^*)$
are obtained via the transgression map $\brT$.
These are called \textit{superfields}.
Superfields are denote $(\bX^i, \bz_i, \bw^a, \by_a)$, where 
$\bX^i: T[1]\Sigma \rightarrow M$, 
$\bz_i \in \Gamma(T[1]\Sigma, \bX^* T^*[1]M)$,
$\by_a \in \Gamma(T[1]\Sigma, \bX^* A^*)$, and 
$\bw^a \in \Gamma(T[1]\Sigma, \bX^* A[1])$.
Local coordinates $(\sigma^{\mu}, \theta^{\mu})$
on $T[1]\Sigma$ of degree $(0,1)$ are introduced.
The graded symplectic form on the mapping space 
$\Map(T[1]\Sigma, T^*[1]A^*)$ is obtained from Eq.~\eqref{gsymp} via the 
transgression map:
\begin{eqnarray}
\ggomega = \brT \gomega &=& \int_{T[1]\Sigma} \rd^2 \sigma \rd^2 \theta
\left( \delta \bX^i \wedge \delta \bz_i + \delta \by_a \wedge \delta \bw^a
\right),
\label{ggsymp}
\end{eqnarray}
where $\delta$ represents the differential on the
mapping space $\Map(T[1]\Sigma, T^*[1]A^*)$.
\if0
The homological function of the Q-structure on the mapping space is constructed as $\gS = \gS_0 + \gS_1$.
Here, the first term is $\gS_0 = \iota_{\rd_{\Sigma}} \bvartheta$,
where $\rd_{\Sigma}$ is the differential on $\Sigma$
and $\bvartheta$ is the Liouville $1$-form for $\ggomega$ such that
$\ggomega = - \delta \bvartheta$.
The second term is the integration of the bullback of the homological function 
$\Theta$ of the target space,
$\gS_1 = \int_{T[1]\Sigma} \bX^* \Theta$.
\fi
The AKSZ homological function $\gS$ consists of two terms, 
$\gS = \gS{}_0 + \gS{}_1$.
The first term $\gS{}_0$ arises from the transgression of 
the Liouville $1$-form 
$\vartheta$ satisfying $\gomega = - \delta \vartheta$:
\begin{eqnarray}
\vartheta &=& z_i \rd x^i + y_a \wedge \rd w^a
= z_i^{\nabla} \rd x^i + y_a \wedge \qd w^a,
\end{eqnarray}
where 
$\qd w^a = \rd w^a - \omega^a_{bi} w^b \rd x^i$.
The second term $\gS{}_1$ is given by the transgression of 
the homological function $\Theta_{\pi}^{\nabla} + \Theta_A^{\nabla}$, 
leading to the BV action functional:
\begin{eqnarray}
\gS &=& \gS{}_0 + \gS{}_1
\nonumber \\ &=& 
\int_{T[1]\Sigma} \rd^2 \sigma \rd^2 \theta
\left(\bracket{\bz^{\nabla}}{\sd \bX} 
+ \inner{\by}{\sdd \bw}
\right.
\nonumber \\ &&
\left.
+ (\pi \circ \bX)(\bz^{\nabla}, \bz^{\nabla})
- \bracket{\rho(\bw)}{\bz^{\nabla}} 
- \inner{T(\bw, \bw)}{\by} \right)
\nonumber \\
&=& \int_{T[1]\Sigma} \rd^2 \sigma \rd^2 \theta
\left(\bz_i^{\nabla} \sd \bX^i + \by_a \sdd \bw^a 
\right.
\nonumber \\ &&
\left.
+ \frac{1}{2} \pi^{ij}(\bX) \bz^{\nabla}_i \bz^{\nabla}_j
- \rho^i_a(\bX) \bz^{\nabla}_i \bw^a - \frac{1}{2} T_{ab}^c(\bX) \bw^a \bw^b \by_c \right).
\label{BVaction}
\end{eqnarray}
Here
$\sdd \bW^a = \bbd \bW^a - \omega^a_{bi}(\bX) \bW^b \bbd \bX^i$ and 
$\rd^2 \sigma \rd^2 \theta$ denotes the Berezin measure 
on the supermanifold $T[1]\Sigma$.
The homological condition $\sbv{\gS}{\gS} =0$ is satisfied
since $\sbv{\gS_0}{\gS_0}$ is trivially zero
and $\sbv{\gS_1}{\gS_1} =0$ is obtained from Eq.~\eqref{homological}
under the condition in Proposition \ref{twoP} and $R=0$.
The bracket of $\gS_0$ and $\gS_1$ is the integration of a total derivative,
and zero since $\partial \Sigma = \emptyset$:
\begin{eqnarray}
&& \sbv{\gS_0}{\gS_1} 
\nonumber \\ &&
= \left\{ \int_{T[1]\Sigma} \rd^2 \sigma \rd^2 \theta
\left(\bracket{\bz^{\nabla}}{\sd \bX} 
+ \inner{\by}{\sdd \bw}, 
(\pi \circ \bX)(\bz^{\nabla}, \bz^{\nabla})
- \bracket{\rho(\bw)}{\bz^{\nabla}} 
- \inner{T(\bw, \bw)}{\by} \right) \right\}
\nonumber \\
&&= \int_{T[1]\Sigma} \rd^2 \sigma \rd^2 \theta \
\bbd \left(
(\pi \circ \bX)(\bz^{\nabla}, \bz^{\nabla})
- \bracket{\rho(\bw)}{\bz^{\nabla}} 
- \inner{T(\bw, \bw)}{\by} \right) =0.
\end{eqnarray}
Even if the Lie algebroid $A$ is the action Lie algebroid,
this AKSZ action functional \eqref{BVaction} is different from 
the one in \cite{Zucchini:2008cg}.

The key result is summarized as follows:
\begin{theorem}\label{BVcondition}
$\sbv{\gS}{\gS} = 0$
if and only if
\begin{eqnarray}
{}^A \nabla \pi = 0, \quad \baS = 0, \qquad R=0.
\end{eqnarray}
\end{theorem}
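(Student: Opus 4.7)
The plan is to reduce the BV master equation $\sbv{\gS}{\gS}=0$ on the mapping space to the classical master equation $\{\Theta,\Theta\}=0$ on the target, where $\Theta=\Theta_\pi^{\nabla}+\Theta_A^{\nabla}$, and then to expand this into its three constituent Poisson brackets and identify each with one of the stated geometric conditions.

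First I would split $\gS=\gS_0+\gS_1$ as in the AKSZ recipe and observe, as already noted after \eqref{BVaction}, that $\sbv{\gS_0}{\gS_0}=0$ trivially (the kinetic piece has no selfinteractions), while $\sbv{\gS_0}{\gS_1}$ integrates to a worldsheet total derivative $\int_{T[1]\Sigma}\bbd(\bX^*\Theta)$, which vanishes since $\partial\Sigma=\emptyset$. Hence $\sbv{\gS}{\gS}=\sbv{\gS_1}{\gS_1}$, and the standard AKSZ identity
\begin{equation*}
\sbv{\gS_1}{\gS_1}=\int_{T[1]\Sigma}\rd^2\sigma\,\rd^2\theta\ \bX^*\{\Theta,\Theta\}
\end{equation*}
shows that $\sbv{\gS}{\gS}=0$ is equivalent to $\{\Theta,\Theta\}=0$ as a function on $T^*[1]A^*$ (one uses that the superfields are free so the integrand vanishing pointwise is forced).

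Next I would expand
\begin{equation*}
\{\Theta,\Theta\}=\{\Theta_\pi^{\nabla},\Theta_\pi^{\nabla}\}+2\{\Theta_\pi^{\nabla},\Theta_A^{\nabla}\}+\{\Theta_A^{\nabla},\Theta_A^{\nabla}\}
\end{equation*}
and treat the three pieces separately. For $\{\Theta_A^{\nabla},\Theta_A^{\nabla}\}$, since $\Theta_A^{\nabla}=\Theta_A$, the Jacobi and Leibniz identities for the Lie algebroid $A$ give vanishing automatically. For $\{\Theta_\pi^{\nabla},\Theta_A^{\nabla}\}$, Proposition \ref{twoP} provides precisely the equivalence with ${}^A\nabla\pi=0$ and $\baS=0$. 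The remaining piece $\{\Theta_\pi^{\nabla},\Theta_\pi^{\nabla}\}$ I would compute using the nontrivial brackets involving $z_i^{\nabla}$, in particular $\{z_i^{\nabla},z_j^{\nabla}\}=R_{ija}^{\ b}a^ay_b$. The terms without $R$ reassemble into $[\pi,\pi]_S$ and vanish because $\pi$ is already Poisson; the remaining terms collect into an expression proportional to $\pi^{ij}\pi^{k\ell}R_{ika}^{\ b}$-type contractions, which vanish identically iff $R=0$ (the curvature piece cannot be absorbed into the Poisson condition since $\pi$ need not be nondegenerate, so $R=0$ emerges as a genuinely independent requirement).

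The main obstacle will be the last step: verifying cleanly that the part of $\{\Theta_\pi^{\nabla},\Theta_\pi^{\nabla}\}$ surviving after imposing $[\pi,\pi]_S=0$ is nondegenerately controlled by the curvature $R$, so that $R=0$ is really forced and is not already implied by the other two conditions. Once this is in hand, collecting the equivalences from all three Poisson brackets yields the stated iff, completing the proof.
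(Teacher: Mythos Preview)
Your proposal is correct and matches the paper's argument essentially step for step: the paper likewise splits $\gS=\gS_0+\gS_1$, observes $\sbv{\gS_0}{\gS_0}=0$ and that $\sbv{\gS_0}{\gS_1}$ is a boundary term vanishing for $\partial\Sigma=\emptyset$, reduces to $\{\Theta_\pi^\nabla+\Theta_A^\nabla,\Theta_\pi^\nabla+\Theta_A^\nabla\}=0$ on the target, invokes Proposition~\ref{twoP} for the cross bracket, and asserts that $\{\Theta_\pi^\nabla,\Theta_\pi^\nabla\}=0$ requires $R=0$ in addition to $[\pi,\pi]_S=0$. The obstacle you flag at the end---that the surviving curvature contraction $\pi^{ij}\pi^{kl}R_{ika}{}^{b}$ must genuinely force $R=0$---is exactly the step the paper states without further elaboration.
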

First two conditions correspond to Proposition \ref{twoP}.

From the general theory of the BV-BFV-AKSZ formalism, 
the homological condition is same 
as the classical master equation $\sbv{\gS}{\gS} = 0$ in the BV formalism.
Thus, the classical action functional \eqref{GPSM} is gauge invariant.
In the Hamiltonian formalism, 
this means that constraints are first-class.

\subsection{GPSM with boundary}\label{sec:GPSMboundary}
In this section, we generalize the GPSM on a manifold $\Sigma$ with boundary, 
$\partial \Sigma \neq \emptyset$.
Boundary terms can be added in the action functional.
In the PSM, consistent boundary conditions are labeled 
by coisotropic submanifolds of $M$ \cite{Cattaneo:2003dp}.
Here, we assume the simplest case, 
where the coisotropic submanifold is
determined by $z_i =0$ for $i= 1, \ldots, \mathrm{dim}(M)$
Under this assumption, in our GPSM, boundary conditions must 
be consistent with the Lie algebroid structure.
A boundary condition is fixed by introducing a \textit{boundary term} in 
the action functional. 
Since the boundary is one dimension,
the boundary term is an integration of a $1$-form on $\Sigma$.
The general form is as follows:
\begin{eqnarray}
S_{\partial \Sigma} = \int_{\partial \Sigma} \bracket{X^* \mu}{W}
= \int_{\partial \Sigma} \mu_a(X) W^a,
\end{eqnarray}
where $\mu = \mu_a(x) e^a \in \Gamma(A^*)$ is a section of $A^*$.
The total action functional becomes
\begin{eqnarray}
S_b^{\nabla} &=& S^{\nabla} + S_{\partial \Sigma}
\nonumber \\
&=& \int_{\Sigma}
\left(Z^{\nabla}_i \wedge \rd X^i 
+ \frac{1}{2} \pi^{ij}(X) Z^{\nabla}_i \wedge Z^{\nabla}_j
+ Y_a \wedge DW^a - \rho^i_a(X) Z^{\nabla}_i \wedge W^a 
\right.
\nonumber \\ && 
\left.
- \frac{1}{2} T_{ab}^c(X) W^a \wedge W^b Y_c
\right) 
+ \int_{\partial \Sigma} \mu_a(X) W^a,
\label{actionb}
\end{eqnarray}
We require a consistency as a physical theory:
the total action functional is gauge invariant, 
i.e., $\delta S_b^{\nabla} =0$.
It means that the boundary condition to be consistent
with Lie algebroid structures.
Imposing this condition, we obtain conditions for the section $\mu(x)$.
Note that the gauge transformation of a superfield $\Phi$ is given by 
the equation,
\begin{eqnarray}
\delta \Phi  &=& \sbv{\gS}{\Phi}.
\label{gaugetrans}
\end{eqnarray}
Expansion of degree one coordinate $\theta^{\mu}$
on $T[1]\Sigma$ is
\begin{eqnarray}
\Phi(\sigma, \theta)  &=& \Phi^{(0)}(\sigma) + \theta^{\mu} \Phi^{(1)}_{\mu}(\sigma) 
+ \frac{1}{2} \theta^{\mu} \theta^{\nu} \Phi^{(2)}_{\mu\nu}(\sigma).
\end{eqnarray}
The explicit expansions are
\begin{eqnarray}
\bX^i(\sigma, \theta) &=& X^i(\sigma) + \theta^{\mu} Z^{+ i}_{\mu}(\sigma) 
+ \frac{1}{2} \theta^{\mu} \theta^{\nu} t^{+ i}_{\mu\nu}(\sigma),
\\
\bz_i(\sigma, \theta)  &=& - t_i(\sigma) + \theta^{\mu} Z_{\mu i}(\sigma) 
+ \frac{1}{2} \theta^{\mu} \theta^{\nu} X^{+}_{\mu\nu i}(\sigma),
\\
\bw_a(\sigma, \theta) &=& -c_a(\sigma) + \theta^{\mu} W_{\mu a}(\sigma) 
+ \frac{1}{2} \theta^{\mu} \theta^{\nu} Y^{+}_{\mu\nu a}(\sigma),
\\
\by^a(\sigma, \theta) &=& Y^a(\sigma) 
+ \theta^{\mu} W^{+ a}_{\mu}(\sigma) 
+ \frac{1}{2} \theta^{\mu} \theta^{\nu} c^{+ a}_{\mu\nu}(\sigma).
\end{eqnarray}
Using these expansions,
the gauge transformation of the action functional $S_b^{\nabla}$ becomes 
the following boundary integration,
\begin{eqnarray}
\delta S_b^{\nabla} &=& 
\int_{\partial \Sigma}
\left[(t_i - \nabla_i \mu_a c^a) \rd X^i - (\rho^i_a - \pi^{ij} \nabla_j \mu_a) t_i W^a
+ (\rho^i_a \partial_i \mu_b - C_{ab}^c \mu_c) c^a W^b
\right],
\label{boundarygauge}
\end{eqnarray}
where we have assumed the conditions in Theorem \ref{BVcondition}, i.e., 
${}^A \nabla \pi = 0, \baS = 0$ and $R=0$
which ensure the gauge invariance of the bulk action functional
except for the boundary term.
$S_b^{\nabla}$ is gauge invariant if we impost the following conditions:
\footnote{We can choose other boundary conditions to ensure gauge invariance of $S_b$. A problem of ambiguities remains future analysis.}.
\begin{eqnarray}
&& X^i = \mbox{constant}
\label{bou01}
\\
&& \rho^i_a - \pi^{ij} \nabla_j \mu_a =0,
\label{bou02}
\\
&& \rho^i_a \partial_i \mu_b - C_{ab}^c \mu_c=0.
\label{bou04}
\end{eqnarray}
The first condition Eq.~\eqref{bou01} restricts $X$ to a constant map 
on the boundary.
The second condition Eq.~\eqref{bou02} is precisely Eq.~\eqref{PMS2},
which define a momentum section over a Poisson manifold.
The third condition Eq.~\eqref{bou04} is equivalent to Eq.~\eqref{PMS3}, 
which ensures the bracket compatibility condition.
Thus, we obtain conditions (D2) and (D3) in the definition of 
a Hamilton Lie algebroid as the boundary condition of the 
GPSM \eqref{actionb}. Our result is summarized as follows:
\begin{proposition}
Assume that the condition in Theorem \ref{BVcondition} holds, this means
${}^A \nabla \pi = 0, \baS = 0$ and $R=0$.
The sigma model given by the action functional $S_b^{\nabla}$
in Eq.~\eqref{actionb} is gauge invariant 
if the target space $A$ has a Hamiltonian Lie algebroid structure 
over a Poisson manifold. 
The section $\mu$ in the boundary term is a momentum section.
\end{proposition}

\subsection{Gauged Dirac sigma models}\label{sec:GDSM}
In this section, we consider a physical theory which has a Hamiltonian Lie algebroid structure over a Dirac structure.

Let $\Xi$ be a three dimensional manifold 
with two dimensional boundary $\Sigma = \partial \Xi$ and 
let $L$ be a Dirac structure in $TM \oplus T^*M$.
The Dirac sigma model (DSM) \cite{Kotov:2004wz} is a sigma model
from $\Xi$ to a Dirac structure $L$.
Concrete construction is as follows.

Let $X:\Xi \rightarrow M$ be a map from $\Xi$ to a smooth manifold $M$.
Consider a $1$-form $V + Z$ on $\Sigma$ that takes value in the pullback 
of the subbundle $L \subset TM \oplus T^*M$,
where $V$ takes a value in $X^* TM$
and $Z$ takes a value in $X^* T^*M$.
Moreover, we assume $M$ is equipped with a metric $g$.
The action functional of the Dirac sigma model is given by
\footnote{In Section \ref{sec:GDSM}, $\alpha$ denotes a coupling constant,
is not a differential form.}
\begin{align}
S_{D} &= \int_{\Sigma} 
\left(\frac{\alpha}{2} g(\rdv X, \rdv X)
+ \bracket{Z}{\rd X} - \frac{1}{2} \bracket{Z}{V} \right)
+ \int_{\Xi} X^* H
\nonumber \\
&= \int_{\Sigma}
\left(\frac{\alpha}{2} g_{ij} \rdv X^i \wedge * \rdv X^j 
+ Z_i \wedge \rd X^i - \frac{1}{2} Z_i \wedge V^i
\right)
+ \int_{\Xi} \frac{1}{3!} H_{ijk}(X) \rd X^i \wedge \rd X^j \wedge \rd X^k,
\label{DSM}
\end{align}
where $\alpha$ is a constant, $\rdv X = \rd X - V$,
and $H$ is a closed $3$-form associated with the Dorfman bracket 
on $TM \oplus T^*M$.
The action $S_{D}$ is consistent if and only if $L$ is a Dirac structure 
of $TM \oplus T^*M$.
The DSM generalizes the PSM since,
when $H=\alpha= 0$ and $V = (\pi \circ X)^{\sharp} (Z)$, 
the action functional \eqref{DSM} reduces to that of the PSM
\cite{Kotov:2004wz}.
Here, $Z+(\pi \circ X)^{\sharp} (Z)$ is the pullback of the Dirac structure
$L_{\pi}$.

Next, we consider 'gauging' of the DSM with respect to a Lie algebroid 
$A$ over $M$, analogous to the GPSM in Section \ref{sec:GPSM}.
This is constructed by adding $S_L$ in Eq.~\eqref{GPSM} to $S_{D}$,
similar to the GPSM.
The action functional of the \textit{gauged Dirac sigma model} (GDSM) is
\begin{eqnarray}
S &=& S_D + S_L
\nonumber \\
&=& \int_{\Sigma}
\left(\frac{\alpha}{2} g_{ij} \rdv X^i \wedge * \rdv X^j 
+ Z_i \wedge \rd X^i - \frac{1}{2} Z_i \wedge V^i
\right)
+ \int_{\Xi} \frac{1}{3!} H_{ijk}(X) \rd X^i \wedge \rd X^j \wedge \rd X^k
\nonumber \\ &&
+ \int_{\Sigma} \left(
Y_a \wedge \rd W^a - \rho^i_a(X) Z_i \wedge W^a 
+ \frac{1}{2} C_{ab}^c(X) W^a \wedge W^b Y_c
\right),
\label{GDSM}
\end{eqnarray}
where the second $\Sigma$ integration term is gauging terms. 
We now rewrite Eq.~\eqref{GDSM} to a manifestly target-space covariant form:
\begin{eqnarray}
S^{\nabla} 
&=& \int_{\Sigma}
\left(\frac{\alpha}{2} g_{ij} \rdvn X^i \wedge * \rdvn X^j 
+ Z^{\nabla}_i \wedge \rd X^i - \frac{1}{2} Z^{\nabla}_i \wedge V^{\nabla i}
\right)
+ \int_{\Xi} \frac{1}{3!} H_{ijk}(X) \rd X^i \wedge \rd X^j \wedge \rd X^k
\nonumber \\ 
&& + \int_{\Sigma} \left(
Y_a \wedge \rdd W^a - \rho^i_a(X) Z^{\nabla}_i \wedge W^a 
- \frac{1}{2} T_{ab}^c(X) W^a \wedge W^b Y_c
\right),
\label{CGDSM}
\end{eqnarray}
where $\rdvn X^i = \rd X^i - V^i - \rho^i_a W^a$.

If $V = (\pi^{\sharp} \circ X)(Z)$ and $H=\alpha=0$, the action functional 
\eqref{CGDSM} reduces to the gauged Poisson sigma model \eqref{CGPSM}.
Conditions $R=0$ and ${}^AS=0$ are also required for gauge invariance 
of the action functional \eqref{CGDSM} in the GDSM.

We can adopt the same covariantized gauge transformations 
as the GPSM for the Lie algebroid action on the
fields $X^i, W^a, Y_a, Z_i$.
To verify the action of the Hamiltonian Lie algebroid $A$ on the GDSM,
we analyze the Lie algebroid action on the action functional 
$S^{\nabla}$ in \eqref{CGDSM}.
The Lie algebroid action on each field is given by $c^a$ terms 
in Eqs.~\eqref{gauge21}--\eqref{gauge24}. 
Meanwhile, the $t_i$ terms in Eqs.~\eqref{gauge21}--\eqref{gauge24} induces 
infinitesimal automorphisms on the Dirac structure, which are independent of
the Lie algebroid action.
The Lie algebroid action on $X^i, W^a, Y_a, Z_i$ is given by
the same equations as Eqs.~\eqref{gauge21}--\eqref{gauge24},
\begin{eqnarray}
\delta^{\nabla}_{c} X^i &=& \rho^i_a(X) c^a,
\label{gauge31}
\\
\delta^{\nabla}_{c} W^a &=& \rd c^a + C_{bc}^a(X) W^b c^c 
+ \omega^a_{bi} c^b \rdvn X^i
\label{gauge32}
\\
\delta^{\nabla}_{c} Y_a &=& 
- C_{ab}^c(X) Y_c c^b + \omega^c_{bi} \rho^i_a Y_c c^b
\label{gauge33}
\\
\delta^{\nabla}_{c} Z_i^{\nabla} &=& 
-\nabla_i \rho^j_a(X) Z_j^{\nabla} c^a + S_{iab}^c(X) W^a Y_c c^b.
\label{gauge34}
\end{eqnarray}
$\int_{\Xi} X^* H$ is invariant since
$\delta_c \int_{\Xi} X^* H = \int_{\Xi} X^* \iota_{\rho} \rd H =0$,
from $\rd H=0$.

The remaining terms must be gauge invariant, i.e.,
\begin{eqnarray}
&& \delta_c \int_{\Sigma}
\left(\frac{\alpha}{2} g_{ij} \rdvn X^i \wedge * \rdvn X^j
+ Z^{\nabla}_i \wedge \rd X^i - \frac{1}{2} Z^{\nabla}_i \wedge V^{\nabla i}
+ Y_a \wedge \rdd W^a - \rho^i_a(X) Z^{\nabla}_i \wedge W^a 
\right.
\nonumber \\ &&
\left.
- \frac{1}{2} T_{ab}^c(X) W^a \wedge W^b Y_c
\right) =0.
\label{remaingauge}
\end{eqnarray}
Under \eqref{gauge31}--\eqref{gauge34} and conditions $R={}^AS=0$,
Eq.~\eqref{remaingauge} holds if and only if the gauge transformation 
of $V^{\nabla}$ satisfies
\begin{eqnarray}
\delta^{\nabla}_{c} V^{\nabla i} &=& 
(\nabla_j \rho^i_a) V^{\nabla j} c^a.
\label{gauge35}
\end{eqnarray}
Additionally, the condition ${}^A \nabla g=0$ must be satisfied.
Note that, from Eq.~\eqref{gauge35},
$D_V^{\nabla} X^i$ transforms covariantly:
\begin{eqnarray}
\delta (D_V^{\nabla} X^i) &=& 
(\nabla_j \rho^i_a) (D_V^{\nabla} X^j) c^a.
\end{eqnarray}
Thus, $\delta \left(\frac{1}{2} g_{ij} \rdvn X^i \wedge * \rdvn X^j\right) =0$,
which ensures the gauge invariance of the remaining terms.

\if0
from Theorem \ref{BVcondition} and the gauge transformation with respect to the Lie algebroid $A$ of the first term 
$\frac{1}{2} g_{ij} \rdvn X^i \wedge * \rdvn X^j$
since the GPSM is the special case such that
$V = \pi^{\sharp} \circ X(Z)$ and $H=g=0$ in \eqref{CGDSM} again.
\fi

We now prove the following proposition.
\begin{lemma}\label{GDcondition}
Assume that $R = 0, \baS = 0$. Then,
Equation \eqref{gauge35} holds if and only if
condition (D1) is satisfied.
\end{lemma}

\begin{proof}
If $Z+V$ belongs to the pullback of an element in a Dirac structure $L$, 
it can be parametrized as
$Z_i = (g_{ij} (\mathrm{id} + \calO)^j_k) \mathfrak{a}^k$,
$V^i = (\mathrm{id} - \calO)^i_j \mathfrak{a}^j$,
where $\mathfrak{a} = \mathfrak{a}^i \partial_i \in \Omega^1(\Sigma, X^*TM)$
is a $1$-form on $\Sigma$ taking values in $X^*TM$, and
$\calO \in \Gamma(O(TM))$ is an orthogonal matrix with respect to 
the metric $g$ \cite{Kotov:2004wz}.
Thus, we obtain
\begin{eqnarray}
V^i &=& (\mathrm{id} - \calO)^i_j ((\mathrm{id} + \calO)^{-1})^j_k g^{kl} Z_l
= U^{ij}(x) Z_j,
\label{gauge41}
\end{eqnarray}
where 
\begin{eqnarray}
U= \frac{\mathrm{id} - \calO}{g(\mathrm{id} + \calO)}.
\label{defU}
\end{eqnarray}
Since $U$ is a local function of $x$, we obtain
\begin{eqnarray}
\delta V^{\nabla i} &=& \delta (U^{ij}(x) Z^{\nabla}_j) 
= \rho^k_a \partial_k U^{ij} Z^{\nabla}_j c^a 
+ U^{ij} \nabla_j \rho^k_a Z^{\nabla}_k c^a.
\label{gauge51}
\end{eqnarray}
Eq.~\eqref{gauge51} is equal to Eq.~\eqref{gauge35}
if and only if ${}^A \nabla U =0$,
which is equivalent to the condition (D1):
${}^A \nabla(Z+V)(e_1, e_2)$ is the pullback of $L$ 
for every $e_1, e_2 \in \Gamma(A)$.
\hfill\qed
\end{proof}
From Lemma \ref{GDcondition}, we derive the following theorem.
\begin{theorem}\label{GDcondition2}
If $R = 0, \baS = 0$, ${}^A g = 0$ and the condition (D1) holds,
then the action functional of the gauged DSM \eqref{GDSM} is gauge invariant.
\end{theorem}

\if0
A Poisson structure is a particular case of a Dirac structure
as in Example \ref{Poissondirac}.
If the Dirac structure is $L= L_{\pi}$, i.e., $V = (\pi^{\sharp} \circ X)(Z)$ in the gauged DSM \eqref{GDSM},
the action functional reduces to on of the gauged PSM \eqref{GPSM}.
\fi

\subsection{GDSM with boundary}\label{sec:GDSMboundary}
Similar to Section \ref{sec:GPSMboundary}, 
we consider the worldsheet $\Sigma$ with boundary, $\partial \Sigma \neq \emptyset$.
We introduce $\mu \in \Gamma(A^*)$ by adding a boundary term in the GDSM
and discuss relations with conditions (D2) and (D3).

For the case $H$ is not exact, we need to consider one dimensional boundaries of boundaries of the three dimensional manifold $\Xi$.
This requires working with manifolds with corners or the D-brane 
formulation of sigma models \cite{Alekseev-Schomerus}.
In this section, to avoid such technical issues,
we assume the $3$-form $H$ is globally exact, i.e.,  $H=\rd B$ for
some $2$-form $B$, for simplicity.
However, generalizing the theory to arbitrary $H$ is not difficult.
Our goal is to demonstrate that at least one physical example exists 
with a Hamiltonian Lie algebroid structure over a Dirac structure.

\if0
The action functional of the GDSM is given by
\begin{eqnarray}
S^{\nabla}
&=& \int_{\Sigma}
\left(\frac{\alpha}{2} g_{ij} \rdvn X^i \wedge * \rdvn X^j 
+ Z^{\nabla}_i \wedge \rd X^i - \frac{1}{2} Z^{\nabla}_i \wedge V^{\nabla i}
+ \frac{1}{2} B_{ij}(X) \rd X^i \wedge \rd X^j
\right.
\nonumber \\ && 
\left.
+ Y_a \wedge \rdd W^a - \rho^i_a(X) Z^{\nabla}_i \wedge W^a 
- \frac{1}{2} T_{ab}^c(X) W^a \wedge W^b Y_c
\right).
\label{GDSMB}
\end{eqnarray}
\fi

We add the following same boundary term as the GPSM:
\begin{eqnarray}
S_{\partial \Sigma} = \int_{\partial \Sigma} \bracket{X^* \mu}{W}
= \int_{\partial \Sigma} \mu_a(X) W^a.
\end{eqnarray}
Then, the total action functional is given by
\begin{eqnarray}
S^{\nabla}_b
&=& \int_{\Sigma}
\left(\frac{\alpha}{2} g_{ij} \rdvn X^i \wedge * \rdvn X^j 
+ Z^{\nabla}_i \wedge \rd X^i - \frac{1}{2} Z^{\nabla}_i \wedge V^{\nabla i}
+ \frac{1}{2} B_{ij}(X) \rd X^i \wedge \rd X^j
\right.
\nonumber \\ && 
\left.
+ Y_a \wedge \rdd W^a - \rho^i_a(X) Z^{\nabla}_i \wedge W^a 
- \frac{1}{2} T_{ab}^c(X) W^a \wedge W^b Y_c
\right)
+ \int_{\partial \Sigma} \mu_a(X) W^a,
\label{bGDSM}
\end{eqnarray} 
We now determine the conditions required for the total action 
to be gauge invariant, i.e., to satisfy $\delta S_b^{\nabla} =0$.

The gauge transformation of \eqref{bGDSM} is concretely computed 
using $U$ defined in Eq.~\eqref{defU} and 
gauge transformations \eqref{gauge31}--\eqref{gauge35}:
\begin{eqnarray}
\delta S_b^{\nabla} &=& 
\int_{\partial \Sigma}
\left((t_i - \nabla_i \mu_a c^a) \rd X^i - (\rho^i_a - U^{ij} \nabla_j \mu_a) t_i W^a
+ (\rho^i_a \partial_i \mu_b - C_{ab}^c \mu_c) c^a W^b
\right),
\label{boundarygauge2}
\end{eqnarray}
The action $S_b^{\nabla}$ is gauge invariant, $\delta S_b^{\nabla} =0$
if we impose the following boundary conditions:
\begin{eqnarray}
&& X^i = \mbox{constant}
\label{bou11}
\\
&& \rho^i_a - U^{ij} \nabla_j \mu_a =0,
\label{bou12}
\\
&& \rho^i_a \partial_i \mu_b - C_{ab}^c \mu_c=0.
\label{bou14}
\end{eqnarray}
Eq.~\eqref{bou11} restricts $X$ to a constant map on the boundary.
Substituting Eq.~\eqref{defU} into Eq.~\eqref{bou12},
there exists an element $\mathfrak{b} \in \Gamma(TM \otimes A^*)$ such that
\begin{eqnarray}
\nabla_i \mu_a &=& (g_{ij} (\mathrm{id} + \calO)^j_k) \mathfrak{b}^k_a,
\\
\rho^i_a &=& (\mathrm{id} - \calO)^i_j \mathfrak{b}^j_a.
\end{eqnarray}
This implies that $(\rho+\nabla \mu)(e)$ is an element of a Dirac structure, 
which is equivalent to condition (D2) in the definition of a momentum section 
over the Dirac structure.
Furthremore, Eq.~\eqref{bou14} is equivalent to Eq.~\eqref{D03} 
under condition (D2), which is identical to Eq.~\eqref{bou12}.
Thus, we obtain conditions (D2) and (D3) in the definition of 
a Hamilton Lie algebroid as the boundary condition of the GDSM \eqref{bGDSM}.
\begin{proposition}
Consider a sigma model given by the action functional \eqref{bGDSM}, 
i.e., the GDSM with boundary. Then, 
under the assumption in Theorem \ref{GDcondition2}, namely
$R = 0, \baS = 0$, ${}^A g = 0$ and (D1),
the action functional is gauge invariant 
if the boundary has a Hamiltonian Lie algebroid structure 
over a Dirac structure. 
The section $\mu$ in the boundary term is a momentum section.
\end{proposition}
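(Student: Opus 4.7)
The plan is to compute the gauge variation $\delta S_b^{\nabla}$ directly, exploiting the bulk gauge invariance already established in Proposition~\ref{GDcondition}. Under $R=0$, $\baS=0$, ${}^A\nabla g=0$, and condition (D1), the bulk $S^{\nabla}$ is invariant on a closed worldsheet; on a worldsheet with boundary the only obstruction to $\delta S_b^{\nabla}=0$ must arise from boundary terms produced by integration by parts together with the direct variation of $\int_{\partial\Sigma}\mu_a(X)A^a$. I therefore expect the calculation to split cleanly into a bulk piece that vanishes automatically and a surface piece whose cancellation is precisely the Hamiltonian condition.

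First, I would apply the transformations \eqref{gauge31}--\eqref{gauge35} to each summand of $S_b^{\nabla}$, tracking only the contributions in which a derivative lands on the bosonic gauge parameter $t_i$ (from $\bz_i$) or on the ghost $c^a$ (from $\ba^a$). These are exactly the terms surviving Stokes' theorem. Combining them with the explicit variation of $\mu_a(X)A^a$ on $\partial\Sigma$ and reorganizing, I expect to recover exactly \eqref{boundarygauge2}, modulo checking that the $B$-field term $\frac{1}{2}B_{ij}(X)\rd X^i\wedge\rd X^j$ contributes no extra surface term; the latter follows from $\rd B=H$ and the bulk argument previously used to handle $\int_{\Xi}X^*H$.

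Second, imposing $\delta S_b^{\nabla}=0$, I would read off the three boundary conditions as the independent coefficients in \eqref{boundarygauge2}. Fixing $X$ constant on $\partial\Sigma$ kills the $t_i\,\rd X^i$ term, while the coefficients of $t_i A^a$ and of $c^a A^b$ must separately vanish, yielding \eqref{bou12} and \eqref{bou14}. To identify \eqref{bou12} with (D2), I would invoke the orthogonal parametrization of the Dirac structure used in the proof of Proposition~\ref{GDcondition}: writing $\nabla_i\mu_a = g_{ij}(\mathrm{id}+\calO)^j_k\,\mathfrak{b}^k_a$ and $\rho^i_a = (\mathrm{id}-\calO)^i_j\,\mathfrak{b}^j_a$ for some $\mathfrak{b}\in\Gamma(TM\otimes A^*)$, the equation $\rho^i_a = U^{ij}\nabla_j\mu_a$ with $U$ given by \eqref{defU} is equivalent to $(\rho+\nabla\mu)(e)\in\Gamma(L)$ for every $e\in\Gamma(A)$, i.e., exactly (D2). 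Finally, \eqref{bou14} is the infinitesimal equivariance $\rho(e_1)\mu(e_2)=\mu([e_1,e_2])$, which, once (D2) holds, is equivalent to (D3) by \eqref{D04}.

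The main obstacle is the sign- and index-bookkeeping in deriving \eqref{boundarygauge2}: one must correctly isolate the Stokes boundary contributions from the bulk-invariant pieces, and verify that the covariantizations present in $Z^{\nabla}$, $\rdd A^a$, $V^{\nabla}$ and in the gauge transformations themselves do not introduce additional surface terms beyond those already written. Once \eqref{boundarygauge2} is in hand, the algebraic identification of \eqref{bou12}--\eqref{bou14} with (D2)--(D3) is routine and parallels the GPSM-with-boundary argument of Section~\ref{sec:GPSMboundary}, with the orthogonal matrix $\calO$ replacing the Poisson graph parametrization.
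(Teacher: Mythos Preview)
Your proposal is correct and follows essentially the same approach as the paper: compute the gauge variation to obtain \eqref{boundarygauge2}, read off the boundary conditions \eqref{bou11}--\eqref{bou14}, then identify \eqref{bou12} with (D2) via the orthogonal parametrization $U=(\mathrm{id}-\calO)/g(\mathrm{id}+\calO)$ and \eqref{bou14} with (D3) via the equivariance formula \eqref{D04}. Your explicit flagging of the $B$-field contribution and of which terms survive as Stokes remainders is slightly more detailed than the paper's presentation, but the logic and key steps are identical.
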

If the Dirac structure is $L= L_{\pi}$, i.e.,
it is induced from a Poisson structure,
we set $V = (\pi^{\sharp} \circ X)(Z)$ in the 
GDSM with boundary \eqref{bGDSM}.
Then, the action functional then reduces to that of the 
GPSM with boundary \eqref{actionb}.

\section{Conclusion and discussion}
We have introduced the concept of a Hamiltonian Lie algebroid and a momentum section over a Dirac structure.
This framework encompasses Hamiltonian Lie algebroids over pre-symplectic and Poisson manifolds as special cases.
After deriving key formulas, we have provided several examples.
In the final section, we constructed a sigma model incorporating the structure of a Hamiltonian Lie algebroid over both a Poisson manifold and a Dirac structure.
The existence of physical models underscores the usefulness of our definition.

For further applications, it is crucial to identify more examples of Hamiltonian Lie algebroids.
For instance, an alternative gauging of the Poisson sigma model was proposed in \cite{Zucchini:2008cg} and \cite{Bonechi:2012kh},
and generalizing the concept of a momentum map to a momentum section presents an interesting direction for research.

A Lie group-valued momentum map is described by a Dirac structure known as the Cartan-Dirac structure \cite{AMM1998, CBWZ2004}.
Formulating a Lie group-valued momentum map within the framework of Hamiltonian Lie algebroids remains an important open problem.

The relationship between momentum maps and Dirac structures has been analyzed in
\cite{BursztynCavalcantiGualtieri, BursztynCrainic1, BursztynCrainic2, BursztynIglesiasPonteSevera, Balibanu-Mayrand}.
Investigating the connections between their work and our approach is an essential next step.

Another important direction is the study of the reduction of Dirac structures in analogy with symplectic reduction over a symplectic manifold \cite{Marsden-Weinstein}.

\subsection*{Acknowledgments}
\noindent
The author would like to thank to the anonymous referees for relevant contribution to improve the paper.

\if0
The author is grateful to the Erwin Schr\"{o}dinger International Institute for Mathematics and Physics for support within the program ``Higher Structures and Field Theory'' in 2022, and National Center for Theoretical Sciences and National Tsing Hua University, where part of this work was carried, for their hospitality.
Especially, he would like to thank to Hsuan-Yi Liao, Camille Laurent-Gengoux
and Seokbong Seol for his hospitality and useful discussion.
\fi

The author is grateful to the Institut Mittag-Leffler for support within the program ``Cohomological Aspects of Quantum Field Theory'' in 2025, where part of this work was carried, for their hospitality.
This work was supported by JSPS Grants-in-Aid for Scientific Research Number 22K03323.

\appendix
\section{Gauge transformations of the GPSM}
Gauge transformations of fields in the gauged Poisson sigma model are
\begin{eqnarray}
\delta X^i &=& - \pi^{ij}(X) t_j + \rho^i_a(X) c^a,
\label{gauge11}
\\
\delta W^a &=& \rd c^a + C_{bc}^a(X) W^b c^c,
\\
\label{gauge12}
\delta Y_a &=& - \rho^i_a(X) t_i - C_{ab}^c(X) Y_c c^b,
\label{gauge13}
\\
\delta Z_i &=& \rd t_i + \partial_i \pi^{jk}(X) Z_j t_k 
- \partial_i \rho^j_a(X) (-t_j W^a + Z_i c^a) 
+ \partial_i C_{ab}^c(X) W^a Y_c c^b,
\label{gauge14}
\end{eqnarray}
where $t_i$ is a function on $\Sigma$ taking values on $X^*T^*M$ and
$c^a$ is a function on $\Sigma$ taking values on $X^* A$.

The covariantized gauge transformations are
\begin{eqnarray}
\delta^{\nabla} X^i &=& - \pi^{ij}(X) t_j^{\nabla} + \rho^i_a(X) c^a,
\label{gauge21}
\\
\delta^{\nabla} W^a &=& \rd c^a + C_{bc}^a(X) W^b c^c 
+ \omega^a_{bi} c^b \rdd X^i
- \pi^{ij} \omega_{bi}^a (-W^b t_j^{\nabla}  + c^b Z_j^{\nabla})
\\
\label{gauge22}
\delta^{\nabla} Y_a &=& - \rho^j_a(X) t_j^{\nabla}
- C_{ab}^c(X) Y_c c^b + \omega^c_{bi} \rho^i_a Y_c c^b
- \pi^{ij} \omega_{ai}^b Y_b t_j^{\nabla},
\label{gauge23}
\\
\delta^{\nabla} Z_i^{\nabla} &=& \rd t_i^{\nabla} 
+ \partial_i \pi^{jk}(X) Z_j^{\nabla} t_k^{\nabla} 
+ \nabla_i \rho^j_a(X) (t_j^{\nabla} W^a - Z_j^{\nabla} c^a) 
\nonumber \\ && 
- \pi^{jk} R_{ija}^b W^a Y_b t_k^{\nabla}
+ \baS_{iab}^c(X) W^a Y_c c^b,
\label{gauge24}
\end{eqnarray}
where $t^{\nabla}_i := t_i - \omega_{ai}^b c^a Y_b$.

\if0
\begin{eqnarray}
\delta^{\nabla} X^i &=& - \pi^{ij}(X) t_j + \rho^i_a(X) c^a,
\label{gauge21}
\\
\delta^{\nabla} W^a &=& \rd c^a + C_{bc}^a(X) W^b c^c 
- \pi^{ij} \omega_{bi}^a (-W^b t_j  + c^b Z_j)
+ \pi^{ij} \omega_{bi}^a \omega_{dj}^c W^b Y_c c^d,
\label{gauge22}
\\
\delta^{\nabla} Y_a &=& - \rho^j_a(X) t_j
- C_{ab}^c(X) Y_c c^b 
- \pi^{ij} \omega_{ai}^c Y_c  t_j
+ \pi^{ij} \omega_{ai}^b \omega_{cj}^d Y_b Y_d c^c,
\label{gauge23}
\\
\delta^{\nabla} Z_i &=& \rd t_i + \partial_i \pi^{jk}(X) Z_j t_k 
+ \partial_i \rho^j_a(X) (t_j W^a + Z_j c^a) 
+ \partial_i C_{ab}^c(X) W^a Y_c c^b
\nonumber \\ &&
+ \partial_i (\pi^{jk} \omega_{aj}^b) Y_b (-W^a t_k + c^a Z_k)
+ \partial_i (\pi^{jk} \omega_{aj}^b \omega_{ck}^d) Y_b Y_d W^a c^c.
\label{gauge24}
\end{eqnarray}
\fi
We can directly calculate the gauge transformation of the action functional $S^{\nabla}$  
under gauge transformations \eqref{gauge21}--\eqref{gauge23}.

\if0
\section{Geometry of Lie algebroid}\label{geometryofLA}
We summarize notation, formulas and their local coordinate expressions of geometry of a Lie algebroid.

Let $(E, \rho, [-,-])$ be a Lie algebroid over a smooth manifold $M$.
$x^i$ is a local coordinate on $M$, $e_a \in \Gamma(E)$ is a basis of sections of $E$ and $e^a \in \Gamma(E^*)$ is a dual basis of sections of $E^*$. 
$i,j$, etc. are indices on $M$ and $a,b$, etc. are indices on the fiber of $E$.
Local coordinate expressions of the anchor map and the Lie bracket are
$\rho(e_a) f = \rho^i_a(x) \partial_i f$ and
$[e_a, e_b ] = C_{ab}^c(x) e_c$, where $f \in C^{\infty}(M)$ and $\partial_i = \tfrac{\partial}{\partial x^i}$.
Then, identities of $\rho$ and $C$ induced from the Lie algebroid condition are
\beqa 
&& \rho_a^j \partial_j \rho_{b}^i - \rho_b^j \partial_j \rho_{a}^i = C_{ab}^c \rho_c^i,
\label{LAidentity1}
\\
&& C_{ad}^e C_{bc}^d + \rho_a^i \partial_i C_{bc}^e + \mbox{Cycl}(abc) = 0.
\label{LAidentity2}
\eeqa

Let $\nabla$ be an ordinary connection on the vector bundle $E$.
The canonical \textit{$E$-connection} ${}^E \nabla: \Gamma(TM) \rightarrow \Gamma(TM \otimes E^*) $ on the space of vector fields $\mathfrak{X}(M) = \Gamma(TM)$ is defined by
\begin{eqnarray}
{}^E \nabla_{e} v &:=& \calL_{\rho(e)} v + \rho(\nabla_v e)
= [\rho(e), v] + \rho(\nabla_v e),
\end{eqnarray}
where 
$e \in \Gamma(E)$ and $v \in \Gamma(TM)$.
The standard $E$-connection on $E$ is 
\begin{eqnarray}
{}^E \nabla_{e} \alpha &:=& \nabla_{\rho(e)} \alpha.
\end{eqnarray}
for $\alpha \in \Gamma(\wedge^m E^*)$.
%
The dual $E$-connection ${}^E \nabla$ on the space of differential forms 
$\Omega^1(M)$ is obtained from the equation,
\begin{eqnarray}
{}^E \rd \bracket{v}{\alpha} = 
\bracket{{}^E \nabla e}{\alpha} + \bracket{e}{{}^E \nabla \alpha},
\end{eqnarray}
for a vector field $v$ and a $1$-form $\alpha$.
For a $1$-form $\alpha$, it is given by
\begin{eqnarray}
{}^E \nabla_{e} \alpha &:=& \calL_{\rho(e)} \alpha 
+ \bracket{\rho(\nabla e)}{\alpha}.
\end{eqnarray}
Let $\omega = \omega^b_{ai} \rd x^i \otimes e^a \otimes e_b$ be 
a connection $1$-form. 
For the basis vectors, the covariant derivatives are
${\nabla} e_a = - \omega_{ai}^b \rd x^i \otimes e_b$
and ${\nabla} e^a = \omega_{bi}^a \rd x^i \otimes e^b$.
Local coordinate expressions of 
covariant derivatives and the standard $E$-covariant derivative on $TM$ and $E$ are
\begin{eqnarray}
\nabla_i \alpha^a &=& \partial_i \alpha^a {+} \omega_{bi}^a \alpha^b,
\\
\nabla_i \beta_a &=& \partial_i \beta_a {-} \omega_{ai}^b \beta_b,
\\
{}^E \nabla_a v^i
&=&  \rho_a^j \partial_j v^i - \partial_j \rho^i_a v^j
{+} \rho^i_b \omega^b_{aj} v^j,
\\
{}^E \nabla_a \alpha_i
&=&  \rho_a^j \partial_j \alpha_i + \partial_i \rho^j_a \alpha_j
{-} \rho^j_b \omega^b_{ai} \alpha_j.
\end{eqnarray}

The covariant derivative of an $l$-form taking a value in 
$E^{\otimes m} \otimes E^{* \otimes n}$, 
$\alpha \in \Omega^l(M, E^{\otimes m} \otimes E^{* \otimes n})$ is
given by
\begin{align}
\nabla_j \alpha_{k_1 \ldots k_l}{}^{a_1 \ldots a_m}_{b_1 \ldots b_n} &=
\partial_j \alpha_{k_1 \ldots k_l}{}^{a_1 \ldots a_m}_{b_1 \ldots b_n} 
{+} \sum_{i=1}^m \omega_{cj}^{a_i} 
\alpha_{k_1 \ldots k_l}{}^{a_1 \ldots a_{i-1} c a_{i+1} \ldots a_m}_{b_1 \ldots b_n} 
{-} \sum_{i=1}^n \omega_{b_ij}^c 
\alpha_{k_1 \ldots k_l}{}^{a_1 \ldots a_m}_{b_1 \ldots b_{i-1} c b_{i+1} \ldots b_n}.
\end{align}

\if0
$\omega = \omega^b_{ai} dx^i \otimes e^a \otimes e_b$ be 
a connection $1$-form. Then, local coordinate expressions of 
covariant derivatives and the $E$-covariant derivative are
\begin{eqnarray}
\nabla_i \alpha^a &=& \partial_i \alpha^a + \omega_{bi}^a \alpha^b,
\\
\nabla_i \beta_a &=& \partial_i \beta_a - \omega_{ai}^b \beta_b,
\\
{}^E \nabla_{[a} \alpha_{b]} &=& \frac{1}{2} 
\left[\rho^i_{[a} (\partial_i \beta_{b]} 
- \omega_{b]i}^c \beta_c) - C_{ab}^c \beta_c \right].
\end{eqnarray}
\fi
An $E$-torsion, a curvature, an $E$-curvature and a basic curvature,
$R \in \Omega^2(M, E \times E^*)$, $T \in \Gamma(E \otimes \wedge^2 E^*)$, 
${}^E R \in \Gamma(\wedge^2 E^* \otimes E \otimes E^*)$,
and $S \in \Omega^1(M, \wedge^2 E^* \otimes E)$, are defined by
\beqa
R(s, s^{\prime}) &:=& [\nabla_s, \nabla_{s^{\prime}}] - \nabla_{[s, s^{\prime}]}, 
\\
T(s, s^{\prime}) &:=& {}^E\nabla_s s^{\prime} - {}^E\nabla_{s^{\prime}} s
- [s, s^{\prime}],
\label{Etorsion}
\\
{}^ER(s, s^{\prime}) &:=& [{}^E\nabla_s, {}^E\nabla_{s^{\prime}}] - {}^E\nabla_{[s, s^{\prime}]}, 
\\
S(s, s^{\prime}) &:=& \calL_s (\nabla s^{\prime}) 
- \calL_{s^{\prime}} (\nabla s) 
- \nabla_{\rho(\nabla s)} s^{\prime} + \nabla_{\rho(\nabla s^{\prime})} s
\nonumber \\ &&
- \nabla[s, s^{\prime}] = (\nabla T + 2 \mathrm{Alt} \, \iota_\rho R)(s, s^{\prime}),
\eeqa
for $s, s^{\prime} \in \Gamma(E)$.

The following local coordinate expressions are given as
\beqa 
T_{ab}^c &\equiv& 
- C_{ab}^c + \rho_a^i \omega_{bi}^c - \rho_b^i \omega_{ai}^c,
\\
R_{ijb}^a &\equiv& 
\partial_i \omega_{aj}^b - \partial_j \omega_{ai}^b 
{-} \omega_{ai}^c \omega_{cj}^b {+} \omega_{aj}^c \omega_{ci}^b,
\\
S_{iab}^{c} &\equiv& 
\nabla_i T_{ab}^c + \rho_b^j R_{ija}^c - \rho_a^j R_{ijb}^c,
 \nonumber \\
&=& - \partial_i C^c_{ab} {-} \omega_{di}^c C_{ab}^d {+} \omega_{ai}^d C_{db}^c {+} \omega_{bi}^d C_{ad}^c
+ \rho_a^j \partial_j \omega_{bi}^c
- \rho_b^j \partial_j \omega_{ai}^c
\nonumber \\ && 
+ \partial_i \rho_a^j \omega_{bj}^c
- \partial_i \rho_b^j \omega_{aj}^c
+ \omega_{ai}^d \rho_d^j \omega_{bj} ^c
- \omega_{bi}^d \rho_d^j \omega_{aj} ^c,
\eeqa
where the covariant derivative $\nabla_i T_{ab}^c$ is
\beqa 
\nabla_i T_{ab}^c &\equiv& 
\partial_i T_{ab}^c
{+} \omega_{di}^c T_{ab}^d {-} \omega_{ai}^d T_{db}^c {-} \omega_{bi}^d T_{ad}^c.
\eeqa
The $E$-curvature is given from the basic curvature as
\beqa 
{}^E R_{abc}^d &=& \rho_c^i S_{iab}^d.
\eeqa
\fi

\newcommand{\bibit}{\sl}



\end{document}